\documentclass[a4paper,fleqn]{cas-sc}

\usepackage[utf8]{inputenc}
\usepackage[T1]{fontenc}
\usepackage{amsmath,amssymb,amsthm,booktabs,graphicx,microtype}
\usepackage[numbers,sort&compress]{natbib}
\usepackage{doi,xcolor,placeins,float}
\usepackage{array,needspace,tabularx,longtable}
\usepackage{etoolbox}
\newif\ifshowedits
\showeditsfalse
\newcommand{\ed}{\ifshowedits\color{red}\fi}
\newcommand{\edtitle}[1]{\ifshowedits\textcolor{red}{#1}\else#1\fi}
\preto\section{\Needspace{7\baselineskip}}
\preto\subsection{\Needspace{6\baselineskip}}
\preto\subsubsection{\Needspace{4\baselineskip}}
\theoremstyle{plain}
\newtheorem{proposition}{Proposition}[section]
\newtheorem{lemma}[proposition]{Lemma}
\newtheorem{corollary}[proposition]{Corollary}
\theoremstyle{definition}
\newtheorem{definition}[proposition]{Definition}
\theoremstyle{remark}
\newtheorem{remark}[proposition]{Remark}
\numberwithin{equation}{section}

\newcommand{\E}{\mathbb{E}}
\newcommand{\Var}{\operatorname{Var}}
\newcommand{\R}{\mathbb{R}}
\newcommand{\sgn}{\operatorname{sgn}}
\newcommand{\ind}{\mathbf{1}}
\newcommand{\Fil}{\mathcal{F}}
\newcommand{\refl}{\textnormal{\textsc{reflect}}}
\newcommand{\swi}{\textnormal{\textsc{switch}}}
\newcommand{\wit}{\textnormal{\textsc{within}}}
\graphicspath{{./}{figures/}}
\makeatletter
\renewcommand\paragraph{\@startsection{paragraph}{4}{\z@}{1.2ex \@plus .4ex \@minus .2ex}{-0.6em}{\normalfont\normalsize\bfseries}}
\makeatother

\graphicspath{{figures/}}

\ExplSyntaxOn
\cs_set:Npn \__first_footerline:
  {
    \group_begin:
    \small\sffamily
    \ifnum\theblind>0\relax\else\__short_authors: :~\fi
    {\rmfamily\itshape Preprint}
    \group_end:
  }
\ExplSyntaxOff

\begin{document}
\let\WriteBookmarks\relax
\def\floatpagepagefraction{1}
\def\textpagefraction{.001}

\shorttitle{Sign-switched antithetic coupling}

\shortauthors{S. Abkin and P. Daripa}

\title [mode = title]{Sign-switched antithetic coupling in gradient random-walk methods for convection--diffusion equations}

\author[1]{Stephen Abkin}[orcid=0009-0000-5981-9721]
\cormark[1]
\ead{stephen122204@tamu.edu}

\affiliation[1]{organization={Department of Mathematics, Texas A\&M University},
            addressline={3368 TAMU},
            city={College Station},
            postcode={77843-3368},
            state={TX},
            country={USA}}

\author[1]{Prabir Daripa}[orcid=0000-0002-8771-0149]
\ead{daripa@tamu.edu}

\cortext[1]{Corresponding author}

\begin{abstract}
\noindent
{\ed We propose an antithetic coupling that reduces the computational cost of gradient random-walk methods for one-dimensional convection--diffusion equations. The methods represent the solution's spatial derivative by particles with positive or negative masses, whose cumulative sums reconstruct the solution, and model diffusion by Gaussian displacements. The particles reorder and, for a nonlinear flux, interact. Antithetic sampling negates the displacements of a second simulation, which positively correlates the contributions of particles matched by position when their masses have opposite signs. Our coupling instead gives matched particles of opposite sign identical displacements and retains opposite displacements for equal-sign pairs. We prove that each simulation keeps the distribution of an uncoupled run as the matching changes, so the averaged solution keeps the bias of one simulation. For fixed particle configurations and matching, the coupling minimizes each matched pair's conditional variance contribution after diffusion among all couplings with the same Gaussian marginal distributions. From the same configurations, its variance reduction over antithetic sampling at the final diffusion stage has a closed form for any flux. For the heat equation, a martingale decomposition extends both results to each step's contribution to the terminal variance. In a Burgers' equation test, the coupling meets a tolerance for the spatially integrated mean squared error with $4$ simulations, against $34$ for antithetic sampling and $65$ for independent sampling. In a second test, it meets a common error tolerance in about $40\%$ less time than pairing particles only within each sign.}
\end{abstract}

\begin{keywords}
Gradient random-walk methods \sep Burgers' equation \sep Antithetic variates \sep Variance reduction \sep Convection--diffusion equations \sep Monte Carlo methods

\MSC[2020]{65C05, 65C35, 65M75}
\end{keywords}

\maketitle

\section{Introduction}
\label{sec:intro}

For the scalar convection--diffusion equation
\[
 \partial_t u+\partial_x f(u)=\nu\,\partial_{xx}u,\qquad x\in\R,\quad t>0,
\]
with flux $f$ and viscosity $\nu$, one class of Monte Carlo methods represents the spatial derivative $\partial_x u$ of the solution by particles that carry signed masses. Each operator-splitting step moves the particles with the velocity $f'(u)$, sorts them by position, and then, in its diffusion stage, gives each an independent Gaussian displacement. Summing the masses from the left \citep{ghoniem1985,shermanpeskin1986,roberts1989} recovers the particle approximation of the solution, which we call the \emph{field}. For a nonlinear flux, the velocity depends on the field, so the particles interact \citep{jourdain2000}. Methods of this class, referred to as gradient random-walk methods, are grid-free and self-adaptive, concentrating particles where the solution changes rapidly \citep{shermanmascagni1994,bertaglia2024}. Applications include models of nerve conduction \citep{shermanpeskin1986,shermanpeskin1988} and relaxation approximations of hyperbolic systems \citep{bertaglia2024}.

The standard error of the field averaged over $R$ independent simulations decreases like $R^{-1/2}$ \citep[Section~1.1.1]{glasserman2004}, so halving it takes four times as many simulations. Antithetic sampling, a classical way to lower this cost, averages a simulation with a second one driven by the negated random inputs, which are distributed like the original ones \citep{hammersley1956}, \citep[Section~4.2]{glasserman2004}. For the fields $U^A$ and $U^B$ of the two simulations at a point $x$, the variance of their average, the \emph{pair mean}, splits as
\[
 \Var\Bigl(\frac{U^A+U^B}{2}\Bigr)
 =\frac{\Var U^A+\Var U^B}{4}
 +\frac{\operatorname{Cov}\bigl(U^A,U^B\bigr)}{2}.
\]
Any coupling of the two simulations (a joint distribution of their random inputs) that keeps the law (distribution) of each leaves the two variances unchanged, so the pair mean is more accurate than the mean of two independent simulations exactly when the covariance is negative \citep[Section~4.2]{glasserman2004}.

Whether a coupling makes the covariance negative depends on the signs of the masses, because a rightward move lowers the field for a positive mass and raises it for a negative one. Coupling two particle simulations starts by matching each particle of one simulation with a particle of the other, which we call its \emph{partner}. Reusing the sort of each step, we pair the particles in order of position, leftmost with leftmost, and call this \emph{rank matching}. Both simulations start from the same particles, so partners initially share their sign. As the two simulations diverge, partners of opposite sign first arise when neighbors of opposite sign exchange order in one simulation alone, near peaks and troughs of the solution, where its derivative changes sign.

Two partners, whose displacements are driven by standard normal random variables $Z^A$ and $Z^B$, can be reflected, with opposite displacements ($Z^B=-Z^A$), or synchronized, with identical displacements ($Z^B=Z^A$). The two choices are the reflection (``mirror'') and synchronous couplings of two diffusions \citep[Section~3.1.1]{nusken2018}. Each step assigns its Gaussian draws to the particles in order of position, so negating the draws (as standard antithetic sampling does) reflects every rank-matched pair. We take this coupling as our antithetic baseline and call it \emph{reflected pairing}. {\ed We call the particle configurations of both simulations at the start of a diffusion stage the \emph{paired state}. For a fixed paired state and matching,} reflection minimizes, over all couplings of two partners' displacements, their contribution to the variance of the pair mean just after the stage if their signs agree, and maximizes it if they differ (Proposition~\ref{prop:pairwise}). Partners and their signs are known before the displacements are drawn, so each pair can be given the coupling that is best for its signs.

We investigate whether this sign-based coupling reduces the cost of accurate field estimates while preserving each simulation's distribution. Because particles reorder during nonlinear evolution, we examine both the variance after individual diffusion stages and the accuracy and cost of complete simulations.

\paragraph{Contributions.} We address this question by retaining rank matching and synchronizing partners of opposite sign, while partners of equal sign stay reflected. We call this antithetic coupling \emph{sign-switched pairing} (\emph{switching}). For partners with masses $m^A$ and $m^B$, its \emph{sign rule} reads
\[
 Z^B=-\sgn\bigl(m^A\bigr)\,\sgn\bigl(m^B\bigr)\,Z^A,
\]
where $\sgn(m)\in\{-1,+1\}$ is the sign of $m$. As in \citep[Proposition~56]{nusken2018}, the sign of a product decides between reflection and synchronization, but here the masses give that sign exactly, with nothing to solve or approximate. Switching changes only the diffusion stage and adds one sign comparison per matched pair, so it is easy to implement and costs about as much per simulation as reflected pairing (Sections~\ref{sec:rules} and~\ref{sec:test1}).

Although partners and their signs change as the particles reorder, each simulation keeps exactly the law of an uncoupled simulation (Proposition~\ref{prop:law}). The pair mean in turn has the bias of one simulation (Corollary~\ref{cor:bias}). {\ed For a fixed paired state and matching}, the Fr{\'e}chet--Hoeffding bounds on joint probabilities show that the sign rule minimizes each matched pair's variance contribution immediately after diffusion (Proposition~\ref{prop:pairwise}). Reflection has this property only for partners of equal sign. For a given paired state entering the final stage, the \emph{final-stage gain} is the reduction in the spatially integrated variance of the terminal pair mean when switching replaces reflected pairing. Writing integrated covariances as expected partner distances gives this gain in closed form for any flux (Proposition~\ref{prop:finalstage}). The gain is positive whenever partners of opposite sign exist and is largest when they are close. If switching is used only at the final stage, the expected gain is also the reduction in terminal variance (Corollary~\ref{cor:finalonly}). For the heat equation, a martingale decomposition splits the variance of the terminal pair mean into one contribution per stage. Given the paired state entering a stage, the sign rule under rank matching minimizes each matched pair's share of that stage's contribution, and its gain again has a closed form (Proposition~\ref{prop:everystage}).

We compare switching with an alternative that reflects pairs matched in order of position within each sign, so that all partners share a sign. We call it \emph{within-sign pairing}. Its partners can lie farther apart, because rank matching minimizes the total partner distance (Remark~\ref{rem:sorted}). The comparison tests whether keeping rank matching improves on matching within each sign.

The experiments assess the accuracy and computational cost of switching in complete nonlinear simulations. For Burgers' equation with two pulses of opposite sign, switching removes about $95\%$ of the integrated variance of the mean of two independent simulations and reduces that of within-sign pairing by $26\%$ (Example~1, Section~\ref{sec:test1}). In Example~2 (Section~\ref{sec:work}), switching meets a common {\ed tolerance} for the integrated mean squared error with $4$ simulations, where reflected pairing uses $34$ simulations and independent sampling uses $65$ simulations. Switching takes about an eighth of the execution time of reflected pairing, and at a second, looser {\ed tolerance} with fewer particles, it takes about $40\%$ less time than within-sign pairing. Example~3 (Section~\ref{sec:structure}) finds the variance reduction over reflected pairing largest when particles of opposite sign are close, with nearly equal ratios for Burgers' equation and the heat equation. Examples~4--6 (Sections~\ref{sec:profiles}--\ref{sec:observables}) vary the initial profile, final time, viscosity, and flux, combine switching with control variates and with conditional Monte Carlo over the final diffusion stage, and apply it to nonlinear quantities computed from the solution.

\paragraph{Related work.} N{\"u}sken and Pavliotis~\citep{nusken2018} couple copies of a diffusion process for Monte Carlo sampling by a sign-based choice between reflection and synchronous coupling. The sign is that of a product of derivatives of the solution of a Poisson equation \citep[Proposition~56]{nusken2018}. They also propose choosing the sign from derivatives of the function being averaged \citep[Eq.~(148)]{nusken2018}. Consecutive copies are coupled after sorting them by the magnitude of the Poisson solution's gradient \citep[Section~5.2]{nusken2018}. We couple two simulations whose particles reorder as they move.

L{\'e}cot~\citep{lecot_rims2001} and L{\'e}cot and Ogawa~\citep{lecot2002} sort particles by position to organize quasi-random diffusion inputs within one simulation of Burgers' equation. Morokoff and Caflisch~\citep{morokoff1993} do the same for the heat equation. Gradient random-walk methods have convergence and error analyses for nonlinear convection--diffusion equations \citep{roberts1989,bossytalay1997,bossy2004,bossyjourdain2002,jourdain2000,bencheikh2020}. Convergence is also proved for a related particle Monte Carlo method for relaxation approximations of scalar conservation laws \citep{towers2025} and for a stochastic interacting particle-field method for the three-dimensional Keller--Segel chemotaxis system \citep{huwangxinzhang2026}.

Quasi-Monte Carlo methods reduce the variance of Markov-chain and interacting-particle simulations \citep{lecuyer2008,benrached2024,netterdon2026}. Coupling appears in Gibbs sampling \citep{frigessi2000}, parallel antithetic Markov chain Monte Carlo \citep{craiumeng2005}, particle filtering \citep{sen2018,jacob2020}, and multilevel Monte Carlo \citep{giles2008}. In multilevel Monte Carlo, Haji-Ali and Tempone \citep[Section~3.2, Eq.~(14)]{hajiali2018} apply a partitioning estimator to the difference of two levels. Bencheikh and Jourdain study it numerically as ``antithetic sampling'' \citep[Section~2, Eq.~(10)]{bencheikhjourdain2019}. The antithetic estimator of Giles and Szpruch \citep[Section~2.2]{gilesszpruch2014} and the estimator of Szpruch and Tse for interacting particle systems \citep{szpruchtse2021} also act on level differences. We couple simulations of equal particle count and time step.

\paragraph{Scope.} We work with one-dimensional scalar equations and fixed particle masses, where sorting by position supplies rank matching. The heat equation admits a stage-by-stage analysis because its terminal field responds monotonically to each displacement, a property that reordering under Burgers' flux can reverse (Remark~\ref{rem:reversal}). Burgers' equation and a nonconvex cubic flux supply the nonlinear tests. For monotone data, whose masses all have one sign, switching coincides with reflected pairing (Lemma~\ref{lem:conditional}).

\paragraph{Outline.} Section~\ref{sec:background} reviews the solver, Section~\ref{sec:coupling} derives sign-switched pairing and proves that it keeps the law of each simulation, Section~\ref{sec:analysis} analyzes its gain over reflected pairing, Section~\ref{sec:numerics} reports six numerical examples, and Section~\ref{sec:conclusions} concludes.

\section{Gradient random-walk solver}
\label{sec:background}

We recall the gradient random-walk solver and its estimators. The solver combines the signed cumulative reconstruction of L{\'e}cot \citep[Eqs.~(35)--(37)]{lecot_rims2001} and Jourdain \citep{jourdain2000} with operator splitting of transport and diffusion \citep{chorin1973,holden2010}. The representation follows our earlier study \citep{abkindaripa2026}, with Burgers' equation now solved by direct nonlinear transport instead of the Cole--Hopf transformation \citep{hopf1950,cole1951}.

\paragraph{Notation.} We write $\Pr$, $\E$, $\Var$, and $\operatorname{Cov}$ for probability, expectation, variance, and covariance, and $\ind\{E\}$ for the indicator of an event $E$. The standard normal distribution function and density are $\Phi$ and $\phi$, and $\mathcal N(0,I_N)$ is the law of a vector of $N$ independent standard normal random variables. The sign of a real number $m\ne0$ is $\sgn(m)\in\{-1,+1\}$, and $\delta_a$ is the unit point mass at $a\in\R$. Subscripts $i$ and $j$ index particles by rank (their place in left-to-right order), and $k$ indexes the time step and its diffusion stage (Section~\ref{sec:onestep}).

\subsection{From the solution to its gradient}
\label{sec:representation}

We consider the scalar convection--diffusion equation
\begin{equation}
 \partial_t u+\partial_x f(u)=\nu\,\partial_{xx}u,\qquad
 u(x,0)=u_0(x),\quad x\in\R,\quad 0<t\le T,
 \label{eq:pde}
\end{equation}
where $f\in C^2(\R)$ is the flux, $\nu>0$ the viscosity, and $T$ the final time. We assume that $u_0$ is locally absolutely continuous with $u_0'\in L^1(\R)$, so that $u_0$ has bounded variation and finite far-field values $u_\pm=\lim_{x\to\pm\infty}u_0(x)$. The experiments use Burgers' flux $f(u)=u^2/2$, the cubic flux $f(u)=u^3/3$, and $f\equiv0$, which gives the heat equation.

The solver evolves the derivative $w=\partial_x u$. Differentiating \eqref{eq:pde} in $x$ gives
\begin{equation}
 \partial_t w+\partial_x\bigl(f'(u)\,w\bigr)=\nu\,\partial_{xx}w,
 \qquad w(x,0)=u_0'(x).
 \label{eq:gradient-pde}
\end{equation}
Equation~\eqref{eq:gradient-pde} transports $w$ with the velocity $f'(u)$ and diffuses it \citep{jourdain2000,jourdain2006}, and its conservation form preserves the total mass $\int_\R w\,\mathrm dx=u_+-u_-$. The velocity depends on $w$ through $u=u_-+\int_{-\infty}^x w\,\mathrm dy$, so for nonlinear $f$ the equation is nonlinear and nonlocal.

Gradient random-walk methods approximate $w$ by point masses that move with the velocity $f'(u)$ and receive random displacements \citep{ghoniem1985,roberts1989,shermanmascagni1994}. We use $N$ particles with positions $X_i(t)$ and fixed signed masses $m_i$. Each mass stays with its particle, and sorting relabels the particles by their current rank (Section~\ref{sec:onestep}). We recover $u$ by summing the masses from the left,
\begin{equation}
 w^N(\cdot,t)=\sum_{i=1}^N m_i\,\delta_{X_i(t)},
 \qquad
 U(x,t)=u_-+\!\!\sum_{i:\,X_i(t)\le x}\!\! m_i .
 \label{eq:representation}
\end{equation}
The field $U$ is a step function whose jump at each particle position is the sum of the masses located there (we leave its dependence on $N$ implicit). Its right far-field value is $u_-+\sum_i m_i$, so the masses satisfy
\begin{equation}
 \sum_{i=1}^N m_i=u_+-u_-.
 \label{eq:total-mass}
\end{equation}

If particle $i$ moves from $a$ to $a'>a$ while the others stay fixed, the field changes only on the interval it sweeps out,
\[
 U_{\mathrm{new}}(x)-U_{\mathrm{old}}(x)
 =m_i\bigl(\ind\{x\ge a'\}-\ind\{x\ge a\}\bigr)
 =-m_i\ind\{a\le x<a'\}.
\]
A rightward move therefore lowers the field for a positive mass and raises it for a negative one, so the coupling of Section~\ref{sec:coupling} uses the masses as well as the positions.

All simulations, including both simulations of a pair, start from the same deterministic state. With $u_+=u_-$, as for all our initial profiles, and $N$ even, we use $N/2$ particles of each sign with mass magnitude $\mathrm{TV}(u_0)/N$, so that \eqref{eq:total-mass} holds exactly. Here $\mathrm{TV}(u_0)=\int_\R|u_0'(x)|\,\mathrm dx$ is the total variation of $u_0$. The particles of each sign start at the quantiles of orders $(j-\tfrac12)/(N/2)$, $j=1,\ldots,N/2$, of the probability density proportional to $\max(\pm u_0',0)$, the ``optimal deterministic initialization'' of Bencheikh and Jourdain \citep[Section~2.2]{bencheikh2020} applied to each sign. Example~3 instead uses fixed particle configurations (Section~\ref{sec:structure}).

\subsection{One step of a simulation}
\label{sec:onestep}

We advance the particles over $K$ steps of size $h=T/K$, alternating transport and diffusion. Step $k$, $k=0,\ldots,K-1$, takes them from time $kh$ to $(k+1)h$. Its diffusion part, the $k$th \emph{diffusion stage} (stage~$k$ for short), gives every particle an independent Gaussian displacement with standard deviation
\begin{equation}
 \sigma=\sqrt{2\nu h},
 \label{eq:sigma}
\end{equation}
which represents the diffusion term of \eqref{eq:gradient-pde} by Brownian motion \citep{ghoniem1985}. The coupling of Section~\ref{sec:coupling} acts only on these stages. Step~$k$ applies, in order, four updates to the positions $X_i$, masses $m_i$, and velocities $v_i$, $i=1,\dots,N$.
\begin{enumerate}
\item \emph{Transport.} Each particle moves with its velocity,
\[
 X_i\leftarrow X_i+v_i\,h .
\]
\item \emph{Sort.} A stable sort reorders the positions and masses by increasing position, keeping tied particles in their current order, and so sets the ranks for the velocity update and rank matching.
\item \emph{Velocity.} Each particle receives the velocity
\begin{equation}
 v_i\leftarrow f'\Bigl(u_-+\sum_{j\le i}m_j\Bigr).
 \label{eq:velocity}
\end{equation}
\item \emph{Diffusion stage.} Each particle receives its Gaussian displacement,
\[
 X_i\leftarrow X_i+\sigma Z_{k,i},
\]
where the vector $Z_k\sim\mathcal N(0,I_N)$ is drawn independently of $Z_0,\dots,Z_{k-1}$.
\end{enumerate}
Each particle keeps its velocity through the diffusion stage, so the transport of step~$k+1$ uses the velocities computed in step~$k$. Before the first step the initial positions are sorted and the velocities computed by \eqref{eq:velocity}.

For distinct positions, the velocity \eqref{eq:velocity} is the characteristic speed $f'$ \citep[Section~11.2]{leveque2002} at the value of $U$ just to the right of the jump of particle $i$, and at tied positions the sum follows the rank order of the stable sort. It is the drift of Jourdain's interacting particle system with signed weights \citep{jourdain2000}, and Bencheikh and Jourdain \citep[Eqs.~(1.6)--(1.7)]{bencheikh2020} analyze a closely related form for nondecreasing data.

\subsection{Estimators and error}
\label{sec:estimators}

We compare two estimators, both sample means of $R$ independent and identically distributed realizations. In \emph{independent sampling} a realization is one simulation, and the estimator averages the fields $U_1,\dots,U_R$ at time $T$,
\begin{equation}
 \hat u^{\mathrm{ind}}_R(x)=\frac1R\sum_{r=1}^{R}U_r(x).
 \label{eq:est-single}
\end{equation}

In \emph{paired sampling} a realization is a pair of simulations $A$ and $B$ advanced with coupled random inputs (Section~\ref{sec:coupling}), so the fields $U^A_r$ and $U^B_r$ of the $r$th pair are dependent, while different pairs are independent. We take the \emph{pair mean} and average it over the pairs,
\begin{equation}
 \bar U_r(x)=\tfrac12\bigl(U^A_r(x)+U^B_r(x)\bigr),
 \label{eq:est-pair}
\end{equation}
\begin{equation}
 \hat u^{\mathrm{pair}}_R(x)=\frac1R\sum_{r=1}^{R}\bar U_r(x),
 \label{eq:est-ens}
\end{equation}
which is the classical antithetic estimator \citep[Section~4.2]{glasserman2004} applied to complete simulations and uses $2R$ simulations. Without the index $r$, $U$ denotes the field of one uncoupled simulation, and $U^A$, $U^B$, and $\bar U$ denote the fields of one pair.

We measure accuracy on the interval $\Omega=[-5,5]$ at $L=400$ equispaced points $x_1,\dots,x_L$ with spacing $\Delta x=10/399$, in the discrete norm
\[
 \|e\|_\Omega^2=\Delta x\sum_{\ell=1}^{L}e(x_\ell)^2 .
\]
The error of an estimator $\hat u_R$ is its integrated mean squared error (integrated MSE)
\begin{equation}
 \mathcal E(R)=\E\bigl\|\hat u_R-u_{\mathrm{ref}}\bigr\|_\Omega^2 ,
 \label{eq:error}
\end{equation}
where $u_{\mathrm{ref}}$ is a deterministic reference solution (Section~\ref{sec:setup}).

For paired sampling $\E\hat u_R=\E\bar U$ and $\Var\hat u_R(x)=\Var\bar U(x)/R$. The standard bias--variance decomposition splits \eqref{eq:error} into the squared bias $b^2$ and the integrated variance $V$ of one realization,
\begin{equation}
 \mathcal E(R)=b^2+\frac{V}{R},
 \qquad
 b^2=\bigl\|\E\bar U-u_{\mathrm{ref}}\bigr\|_\Omega^2,
 \qquad
 V=\Delta x\sum_{\ell=1}^{L}\Var\bigl(\bar U(x_\ell)\bigr).
 \label{eq:decomposition}
\end{equation}
The same formulas hold for independent sampling with $\bar U$ replaced by $U$.

The squared bias contains the finite-particle and time-stepping errors, and more realizations decrease $V/R$ while leaving $b^2$ unchanged. We require every coupling to preserve the law of each simulation. Such a coupling keeps the $b^2$ of independent sampling (Corollary~\ref{cor:bias}) and changes $\mathcal E(R)$ only through $V$, the quantity that Section~\ref{sec:coupling} targets. For a particular coupling $\bullet$ we write $V_\bullet$ and $\mathcal E_\bullet(R)$.

\section{Sign-switched pairing}
\label{sec:coupling}

We derive the sign rule from the variance of one matched pair at one diffusion stage, apply it at every stage, and show that each simulation keeps its uncoupled law.

\subsection{Variance of one matched pair}
\label{sec:motivation}

At a diffusion stage, we match the particles of $A$ one to one with those of $B$, for instance by rank matching, which pairs equal ranks (Section~\ref{sec:intro}). Let particle $i$ of $A$ and its partner in $B$ have masses $m^A_i$ and $m^B_i$ and random displacements $\sigma Z^A_i$ and $\sigma Z^B_i$. We call the choice $Z^B_i=-Z^A_i$ \emph{reflection} of the pair and $Z^B_i=Z^A_i$ its \emph{synchronization}.

By \eqref{eq:representation} the field after the stage falls with the displacement of a positive mass and rises with that of a negative one. The classical antithetic argument \citep{rubinstein1985}, \citep[Section~4.2]{glasserman2004} states that two monotone functions of one random variable are negatively correlated when one increases and the other decreases, and positively correlated when both move the same way. Reflection therefore correlates the contributions of partners to the pair mean negatively when their signs agree and positively when they differ, and synchronization does the opposite. The sign rule keeps the negative correlation in both cases by setting $Z^B_i=\varepsilon_iZ^A_i$ with the multiplier
\[
 \varepsilon_i=-\sgn\bigl(m^A_i\bigr)\sgn\bigl(m^B_i\bigr).
\]
As in \citep[Proposition~56]{nusken2018}, the sign of a product decides between reflection and synchronization.

To compare the sign rule with every other coupling, in this subsection we fix the matching, condition on the states of both simulations entering the stage, and write $\bar U(x)$ for the pair mean just after it. The partners of pair $i$, $i=1,\dots,N$, enter the stage at positions $a_i$ and $b_i$ and leave it at
\[
 X^A_i=a_i+\sigma Z^A_i,\qquad X^B_i=b_i+\sigma Z^B_i .
\]
A coupling of pair $i$ is a joint law of $(Z^A_i,Z^B_i)$ with standard normal marginal distributions. The displacements of different matched pairs are independent, so each simulation alone receives the independent Gaussian displacements of the solver (Section~\ref{sec:law}).

Every particle belongs to exactly one matched pair, so \eqref{eq:representation} gives
\[
 \bar U(x)=u_-+\tfrac12\sum_{i=1}^N\bigl(m^A_i\chi^A_i+m^B_i\chi^B_i\bigr),
\]
where $\chi^A_i=\ind\{X^A_i\le x\}$ and $\chi^B_i=\ind\{X^B_i\le x\}$. Since distinct matched pairs are independent, the variance of $\bar U(x)$ is the sum over these pairs of
\begin{equation}
 \tfrac14\Bigl[(m^A_i)^2\Var\chi^A_i+(m^B_i)^2\Var\chi^B_i
  +2\,m^A_im^B_i\operatorname{Cov}\bigl(\chi^A_i,\chi^B_i\bigr)\Bigr].
 \label{eq:pairvar}
\end{equation}

The indicators are Bernoulli variables with success probabilities
\[
 p_i=\Pr\{X^A_i\le x\}=\Phi\!\Bigl(\frac{x-a_i}{\sigma}\Bigr),
 \qquad
 q_i=\Pr\{X^B_i\le x\}=\Phi\!\Bigl(\frac{x-b_i}{\sigma}\Bigr),
\]
so $\Var\chi^A_i=p_i(1-p_i)$ and $\Var\chi^B_i=q_i(1-q_i)$. By the Fr{\'e}chet--Hoeffding bounds \citep[see][Theorem~2.2.3 and Section~2.5]{nelsen2006}, for every coupling the joint probability satisfies
\begin{equation}
 \max(p_i+q_i-1,\,0)\;\le\;\Pr\{X^A_i\le x,\ X^B_i\le x\}\;\le\;\min(p_i,q_i).
 \label{eq:frechet}
\end{equation}
The covariance satisfies
\[
 \operatorname{Cov}\bigl(\chi^A_i,\chi^B_i\bigr)=\Pr\{X^A_i\le x,\ X^B_i\le x\}-p_iq_i,
\]
so for every coupling it lies between $\gamma^-_i(x)$ and $\gamma^+_i(x)$, where
\begin{equation}
 \begin{aligned}
 \gamma^-_i(x)&=\max(p_i+q_i-1,0)-p_iq_i\;\le\;0,\\
 \gamma^+_i(x)&=\min(p_i,q_i)-p_iq_i\;\ge\;0.
 \end{aligned}
 \label{eq:cpm}
\end{equation}

The covariance enters \eqref{eq:pairvar} with weight $2m^A_im^B_i$, so a positive mass product calls for the lower bound in \eqref{eq:cpm} and a negative one for the upper bound.

\begin{proposition}[optimality of the sign rule for one pair]
\label{prop:pairwise}
Fix a diffusion stage, a matching, and the states of both simulations before the stage. For a matched pair $i$ with nonzero masses, consider all joint laws of $(Z^A_i,Z^B_i)$ with standard normal marginal distributions, the displacements of distinct matched pairs being independent. The contribution \eqref{eq:pairvar} of pair $i$ to the conditional variance of $\bar U(x)$ after the stage is minimized, simultaneously for every $x\in\R$, by
\[
 Z^B_i=\varepsilon_i\,Z^A_i,\qquad \varepsilon_i=-\sgn\bigl(m^A_i\bigr)\sgn\bigl(m^B_i\bigr).
\]
The minimum equals $\tfrac14\bigl[(m^A_i)^2p_i(1-p_i)+(m^B_i)^2q_i(1-q_i)\bigr]+\tfrac12m^A_im^B_i\,\gamma_i(x)$, with $\gamma_i=\gamma^-_i$ when $m^A_im^B_i>0$ and $\gamma_i=\gamma^+_i$ when $m^A_im^B_i<0$. Reflecting a pair with $m^A_im^B_i<0$ attains the pointwise \emph{maximum} instead.
\end{proposition}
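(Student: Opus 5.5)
Since the variances $p_i(1-p_i)$ and $q_i(1-q_i)$ in \eqref{eq:pairvar} are fixed by the Gaussian marginals, the only coupling-dependent term is $\tfrac12 m^A_im^B_i\operatorname{Cov}(\chi^A_i,\chi^B_i)$. The plan is to split on the sign of $m^A_im^B_i$. We then show that the sign rule makes this covariance equal to the appropriate Fr\'echet--Hoeffding extreme \eqref{eq:cpm} \emph{simultaneously for every $x$}. Because \eqref{eq:frechet} bounds the covariance for every coupling, the lower bound on \eqref{eq:pairvar} holds pointwise in $x$. It is then attained by a single coupling that does not depend on $x$, which gives the stated minimum.

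\emph{Case $m^A_im^B_i>0$}, so $\varepsilon_i=-1$ (reflection). Here $X^A_i\le x$ means $Z^A_i\le (x-a_i)/\sigma$, and $X^B_i\le x$ means $-Z^A_i\le (x-b_i)/\sigma$, that is, $Z^A_i\ge -(x-b_i)/\sigma$. Write $\alpha=(x-a_i)/\sigma$ and $\beta=(x-b_i)/\sigma$. The joint probability is $\Pr\{-\beta\le Z^A_i\le\alpha\}=\max(\Phi(\alpha)-\Phi(-\beta),0)$. Since $\Phi(-\beta)=1-q_i$, this equals $\max(p_i+q_i-1,0)$, so the covariance equals $\gamma^-_i(x)$. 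Multiplying by the positive weight $\tfrac12m^A_im^B_i$ attains the lower bound.

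\emph{Case $m^A_im^B_i<0$}, so $\varepsilon_i=+1$ (synchronization). The joint event is $\{Z^A_i\le\min(\alpha,\beta)\}$, with probability $\Phi(\min(\alpha,\beta))=\min(p_i,q_i)$ by monotonicity of $\Phi$. Hence the covariance equals $\gamma^+_i(x)$. The weight is now negative, so this coupling minimizes the contribution. For the final claim, reflecting such a pair gives covariance $\gamma^-_i(x)$ by the first computation. With the negative weight, this attains the pointwise maximum over all couplings.

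Two points need care, and neither is hard. First, independence across pairs is what makes the variance of $\bar U(x)$ equal to the sum of the contributions \eqref{eq:pairvar}, so each pair can be optimized separately; this is already stated before the proposition. Second, the minimum is simultaneous in $x$. This follows because the countermonotone coupling $Z^B=-Z^A$ and the comonotone coupling $Z^B=Z^A$ realize the Fr\'echet--Hoeffding bounds for every pair of threshold events at once. That is the content of the explicit computations above.
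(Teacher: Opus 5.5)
Your proof is correct and takes essentially the same approach as the paper. In both, the variance terms are fixed by the marginals, \eqref{eq:frechet} confines the covariance to $[\gamma^-_i(x),\gamma^+_i(x)]$, explicit computation of the joint events shows that $Z^B_i=-Z^A_i$ and $Z^B_i=Z^A_i$ attain the lower and upper endpoints at every $x$, and the sign of the weight $\tfrac12 m^A_im^B_i$ then yields both the minimizer and the maximizing property of reflection for opposite signs.
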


\begin{proof}
The variance terms of \eqref{eq:pairvar} do not depend on the coupling, and by \eqref{eq:frechet} the covariance lies in $[\gamma^-_i(x),\gamma^+_i(x)]$. Under reflection and under synchronization, both particles end at or to the left of $x$ exactly on the events
\[
 \{X^A_i\le x,\ X^B_i\le x\}=
 \begin{cases}
 \bigl\{-\tfrac{x-b_i}{\sigma}\le Z^A_i\le\tfrac{x-a_i}{\sigma}\bigr\}, & Z^B_i=-Z^A_i,\\[4pt]
 \bigl\{Z^A_i\le\min\bigl(\tfrac{x-a_i}{\sigma},\tfrac{x-b_i}{\sigma}\bigr)\bigr\}, & Z^B_i=Z^A_i,
 \end{cases}
\]
which have probabilities $\max(p_i+q_i-1,0)$ and $\min(p_i,q_i)$ because $\Phi(-y)=1-\Phi(y)$. The two couplings thus attain the two endpoints at every $x$, as in Whitt's construction of the extremal joint distributions \citep[Theorem~2.5]{whitt1976}. By the weight $2m^A_im^B_i$ in \eqref{eq:pairvar}, reflection therefore minimizes the contribution when $m^A_im^B_i>0$, and when $m^A_im^B_i<0$ synchronization minimizes it and reflection maximizes it.
\end{proof}

Given the paired state entering the stage and the matching, reflection thus minimizes a matched pair's contribution to the conditional variance of $\bar U(x)$ just after the stage if the partners have equal signs, and maximizes it if their signs differ. Section~\ref{sec:analysis} evaluates the gain of switching over reflected pairing, and Section~\ref{sec:matching} analyzes the matching, which sets the positions and masses in \eqref{eq:pairvar}.

\subsection{Coupling two simulations}
\label{sec:rules}

To couple complete simulations, for $k=0,\dots,K$ let
\[
 \Fil_k=\sigma\bigl(Z_0,\dots,Z_{k-1}\bigr)
\]
be the $\sigma$-algebra generated by the Gaussian vectors $Z_0,\dots,Z_{k-1}\in\R^N$ drawn at the first $k$ stages. Since both simulations start from the same deterministic state, $\Fil_k$ determines their states entering stage $k$. An $\Fil_k$-measurable rule uses only what is known before $Z_k$ is drawn, the property behind law preservation (Section~\ref{sec:law}).

\begin{definition}[admissible coupling]
\label{def:coupling}
An \emph{admissible coupling} chooses, at each stage $k$, a bijection $\pi_k$ of the ranks $\{1,\dots,N\}$ and multipliers $\varepsilon_{k,1},\dots,\varepsilon_{k,N}\in\{-1,+1\}$, collected in the vector $\varepsilon_k$, both $\Fil_k$-measurable. It then draws $Z_k\sim\mathcal N(0,I_N)$ independent of $\Fil_k$ and sets
\begin{equation}
  X^A_i \leftarrow X^A_i+\sigma Z_{k,i},
  \qquad
  X^B_j \leftarrow X^B_j+\sigma\,\varepsilon_{k,j}\,Z_{k,\pi_k(j)} .
  \label{eq:coupling}
\end{equation}
Thus particle $j$ of $B$ is matched with particle $\pi_k(j)$ of $A$, and it receives the same displacement as its partner when $\varepsilon_{k,j}=1$ and the opposite displacement when $\varepsilon_{k,j}=-1$.
\end{definition}

We compare three admissible couplings, which differ only in $(\pi_k,\varepsilon_k)$.
\begin{itemize}
\item \emph{Reflected pairing} ($\refl$), our baseline, reflects every rank-matched pair, so $\pi_k=\mathrm{id}$ and $\varepsilon_{k,j}=-1$.
\item \emph{Sign-switched pairing} ($\swi$) keeps rank matching and applies the sign rule at every stage, with $\pi_k=\mathrm{id}$ and
  \begin{equation}
    \varepsilon_{k,j}=-\,\sgn\!\bigl(m^A_j\bigr)\,\sgn\!\bigl(m^B_j\bigr),
    \label{eq:switch}
  \end{equation}
  where $m^A_j$ and $m^B_j$ are the nonzero masses of the particles at rank $j$ entering stage $k$.
\item \emph{Within-sign pairing} ($\wit$) reflects every pair formed by \emph{within-sign matching}, which matches the $n$th positive particle of $B$ from the left with the $n$th positive particle of $A$, and likewise for the negative particles. Thus $\varepsilon_{k,j}=-1$ and $\pi_k$ is this matching. It requires nonzero masses and equal sign counts in $A$ and $B$, which a common initial state and fixed masses ensure. Its partners can lie several particles of the other sign apart (Remark~\ref{rem:sorted}).
\end{itemize}

Each coupling chooses $\pi_k$ and $\varepsilon_k$ after the transport, sort, and velocity updates of step~$k$ (Section~\ref{sec:onestep}). The sign rule adds one comparison per matched pair, $O(N)$ operations per stage, to the solver's $O(N\log N)$ sort.

\subsection{Law preservation}
\label{sec:law}

Viewed alone, each simulation of a pair has the law of an uncoupled simulation, because the matching and multipliers of each stage are fixed before the draw.

\begin{proposition}[law preservation]
\label{prop:law}
Let two simulations be advanced from the same initial state by the steps of Section~\ref{sec:onestep}, with their diffusion draws coupled by an admissible coupling (Definition~\ref{def:coupling}). For $k=0,\ldots,K$, let $\Xi^A_k$ and $\Xi^B_k$ be the complete states $(X,m,v)$ of simulations $A$ and $B$ after $k$ steps. Then the law of the trajectory $(\Xi^A_k)_{k=0}^K$ is exactly the law of an uncoupled simulation, and so is the law of $(\Xi^B_k)_{k=0}^K$. Only the joint law of the two trajectories differs from that of two independent simulations.
\end{proposition}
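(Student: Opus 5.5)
The plan is to show that the diffusion inputs actually seen by each simulation form an i.i.d.\ $\mathcal N(0,I_N)$ sequence. Once that holds, the trajectory is the same deterministic function of these inputs as in an uncoupled run, so its law is the uncoupled law. For $A$ this is immediate: by \eqref{eq:coupling} simulation $A$ receives $Z_k$ itself at every stage. The transport, sort and velocity updates are deterministic maps of the state. Hence $(\Xi^A_k)_{k=0}^K=G(\Xi_0,Z_0,\dots,Z_{K-1})$ for the same measurable map $G$ that defines an uncoupled simulation, and the $Z_k$ are i.i.d.\ standard normal vectors by Definition~\ref{def:coupling}.

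For $B$, define the effective input $W_k\in\R^N$ by $W_{k,j}=\varepsilon_{k,j}Z_{k,\pi_k(j)}$. Then $(\Xi^B_k)=G(\Xi_0,W_0,\dots,W_{K-1})$ with the same map $G$. The key step is to prove that $W_0,\dots,W_{K-1}$ are i.i.d.\ $\mathcal N(0,I_N)$. I will do this by induction on $k$, conditioning on $\Fil_k$. Given $\Fil_k$, the pair $(\pi_k,\varepsilon_k)$ is fixed, so $W_k=D_kP_kZ_k$ with $D_k=\operatorname{diag}(\varepsilon_k)$ and $P_k$ the permutation matrix of $\pi_k$. This matrix is orthogonal and $\Fil_k$-measurable. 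Since $Z_k$ is independent of $\Fil_k$ and its law $\mathcal N(0,I_N)$ is invariant under orthogonal maps, the conditional law of $W_k$ given $\Fil_k$ is $\mathcal N(0,I_N)$. Concretely, for bounded measurable $g$ one has $\E[g(W_k)\mid\Fil_k]=\int g(D_kP_kz)\,\phi_N(z)\,\mathrm dz=\int g\,\phi_N$, by the freezing (substitution) lemma for independent variables.

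Because $W_0,\dots,W_{k-1}$ are $\Fil_k$-measurable, the conditional law above shows that $W_k$ is independent of $(W_0,\dots,W_{k-1})$ and has law $\mathcal N(0,I_N)$. The joint density of $(W_0,\dots,W_{K-1})$ therefore factorizes by iterated conditioning,
\[
\E\Bigl[\prod_{k<K}g_k(W_k)\Bigr]=\prod_{k<K}\int g_k\,\phi_N .
\]
This gives the claim for $B$. The joint law of the two trajectories is that of $(G(\Xi_0,Z),G(\Xi_0,W))$, which in general differs from the law under independent inputs; this is the closing sentence of the statement.

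The only delicate point is the randomness of the matching and multipliers. These depend on past draws, so $W_k$ is not a fixed linear image of $Z_k$. The $\Fil_k$-measurability required in Definition~\ref{def:coupling}, together with the independence of $Z_k$ from $\Fil_k$, is exactly what makes the conditioning argument work. I would also note explicitly that $G$ is the same map for both simulations, since the sort and velocity rules do not depend on the coupling, and that $B$'s state is a function of the $W$'s, which depend on $\Fil_k$ only through $(\pi_k,\varepsilon_k)$. A measurability check on $G$ (sorting with stable tie-breaking is Borel) completes the argument.
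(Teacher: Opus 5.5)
Your proposal is correct and follows the paper's proof. You write $B$'s effective input as the $\Fil_k$-measurable orthogonal image $D_kP_kZ_k$, use conditional orthogonal invariance of $\mathcal N(0,I_N)$ to get independence from the past, conclude by induction that the inputs are i.i.d., and push them through the same deterministic solver map; your freezing-lemma step, the product factorization, and the Borel-measurability remark on the map only make explicit what the paper leaves implicit.
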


\begin{proof}
The transport, sort, and velocity updates of each step act deterministically on the state, so it suffices to compare the displacements of the stages. At stage $k$ simulation $A$ receives $\sigma Z_k$, and by \eqref{eq:coupling} simulation $B$ receives $\sigma\widetilde Z_k$ with
\[
 \widetilde Z_k=D_kP_kZ_k,\qquad (P_kz)_j=z_{\pi_k(j)},\qquad D_k=\operatorname{diag}(\varepsilon_k).
\]
Since $\pi_k$ is a bijection, $\varepsilon_{k,j}=\pm1$, and both are $\Fil_k$-measurable, the matrix $D_kP_k$ is orthogonal and fixed before the draw, as in \citep[Lemma~74]{nusken2018}. Conditionally on $\Fil_k$ the vector $Z_k$ is still $\mathcal N(0,I_N)$, a law invariant under orthogonal maps, so the conditional law of $\widetilde Z_k$ given $\Fil_k$ is $\mathcal N(0,I_N)$. This law does not depend on $\Fil_k$, so $\widetilde Z_k$ is independent of $\Fil_k$ and hence of the $\Fil_k$-measurable vectors $\widetilde Z_0,\dots,\widetilde Z_{k-1}$. By induction on $k$, the vectors $\widetilde Z_0,\dots,\widetilde Z_{K-1}$ are independent with law $\mathcal N(0,I_N)$. Simulation $B$ is therefore the image of this sequence under the deterministic map of an uncoupled simulation (transport, sort with the same rule for ties, velocity, and diffusion), so its trajectory has the uncoupled law. Simulation $A$ is the case $\widetilde Z_k=Z_k$, and the joint law differs because $\widetilde Z_k$ depends on $Z_k$.
\end{proof}

Law preservation fixes the expected field of every simulation, and hence the bias of the pair mean.

\begin{corollary}[common bias]
\label{cor:bias}
Under any admissible coupling, $\E\bar U(x)=\E U(x)$ for every $x$, where $U$ is the field of one uncoupled simulation. Hence the squared bias $b^2$ in \eqref{eq:decomposition} takes the same value for reflected, sign-switched, and within-sign pairing, and equals that of independent sampling.
\end{corollary}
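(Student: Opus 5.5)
The corollary follows from Proposition~\ref{prop:law} and linearity of expectation, so the plan is short. First, I fix $x\in\R$ and note that the field $U^A(x)$ at time $T$ is a deterministic, measurable function of the terminal state $\Xi^A_K$ via \eqref{eq:representation}. It is also bounded: $|U^A(x)|\le|u_-|+\sum_i|m_i|$, since the masses are fixed. Hence $U^A(x)$ is integrable, and its law is determined by the law of $\Xi^A_K$.

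By Proposition~\ref{prop:law}, $\Xi^A_K$ has the law of the terminal state of an uncoupled simulation, and so does $\Xi^B_K$. Therefore $U^A(x)$ and $U^B(x)$ are each distributed like $U(x)$. In particular, $\E U^A(x)=\E U^B(x)=\E U(x)$, with finiteness guaranteed by the boundedness above.

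Linearity of expectation applied to \eqref{eq:est-pair} then gives $\E\bar U(x)=\tfrac12(\E U^A(x)+\E U^B(x))=\E U(x)$. This step needs no information about the dependence between $U^A$ and $U^B$, which is exactly why any admissible coupling works.

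For the second claim, I evaluate this identity at the grid points $x_\ell$. This shows that the squared bias $b^2=\|\E\bar U-u_{\mathrm{ref}}\|_\Omega^2$ in \eqref{eq:decomposition} equals $\|\E U-u_{\mathrm{ref}}\|_\Omega^2$. The latter is the squared bias of independent sampling, whose single realization is $U$. Reflected, sign-switched, and within-sign pairing are all admissible couplings in the sense of Definition~\ref{def:coupling}, since their $\pi_k$ and $\varepsilon_k$ are computed from states entering stage $k$ and are therefore $\Fil_k$-measurable. Consequently they share this common value of $b^2$.

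I expect no real obstacle. The only point needing care is the admissibility of within-sign pairing: its matching must be a bijection, which requires equal sign counts in $A$ and $B$, and this is ensured by the common initial state and fixed masses. The integrability check is the other point, and it is immediate from boundedness.
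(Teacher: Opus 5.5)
Your proof is correct and follows the paper's own argument: the bound $|U|\le|u_-|+\sum_i|m_i|$ gives integrability, Proposition~\ref{prop:law} shows that $U^A(x)$ and $U^B(x)$ each have the law of $U(x)$, and linearity of expectation finishes the first claim. Your check that the three pairings are admissible, and your evaluation of $b^2$ at the grid points, spell out the ``hence'' step that the paper leaves implicit.
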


\begin{proof}
The expectation of $U$ exists because $|U|\le|u_-|+\sum_i|m_i|$. By Proposition~\ref{prop:law}, $U^A$ and $U^B$ each have the law of $U$, and linearity of expectation gives
\[
 \E\bar U(x)=\tfrac12\bigl(\E U^A(x)+\E U^B(x)\bigr)=\E U(x).
 \qedhere
\]
\end{proof}

\begin{remark}[nonlinear functions of the pair mean]
\label{rem:nonlinear}
The argument of Corollary~\ref{cor:bias} applies to any integrable quantity evaluated on each simulation and then averaged. A nonlinear function of the pair mean is a different estimator, whose bias can depend on the coupling, and Example~6 compares the two (Section~\ref{sec:observables}).
\end{remark}

\section{Variance analysis}
\label{sec:analysis}
\label{sec:gain}

Switching \eqref{eq:switch} departs from reflected pairing only for partners of opposite sign, which it synchronizes so that the diffusion stage preserves their distance. Reflection increases their expected distance, and with it their contribution to the conditional variance just after the stage, integrated over $\R$. We show that switching coincides with reflected pairing until such partners appear and, for any flux, how much switching at the final stage reduces the conditional variance of the terminal pair mean given the paired state entering that stage. For the heat equation, we compute the corresponding reduction of each stage's contribution in a martingale decomposition of the terminal variance. We then relate the gain to the matching.

\subsection{When switching and reflected pairing coincide}

Partners of opposite sign first appear when neighbors of opposite sign exchange order in one simulation alone.

\begin{lemma}[coincidence with reflected pairing]
\label{lem:conditional}\label{cor:onesign}
Start reflected and sign-switched pairing from the same paired state and drive them with the same sequence of draws. Let $k^*$ be the first stage at which some matched pair has masses of opposite sign. The two paired trajectories coincide up to the state entering stage $k^*$, and if no such stage occurs among the $K$ stages, their complete trajectories coincide. In particular, if all masses share one sign, as for monotone initial data with $u_+\ne u_-$, whose particles are placed according to $|u_0'|$ and all carry the sign of $u_+-u_-$, sign-switched and reflected pairing are the same algorithm for every realization.
\end{lemma}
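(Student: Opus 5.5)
The proof will be an induction on the stage index $k$, driving both algorithms with one common sequence of Gaussian vectors $Z_0,Z_1,\dots$. Write $\Xi^{A}_k,\Xi^{B}_k$ for the paired state under reflected pairing and $\widehat\Xi^{A}_k,\widehat\Xi^{B}_k$ for the paired state under switching. Both runs start from the same paired state, so the two paired states agree at $k=0$.

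\textbf{Inductive step.} Suppose the paired states agree entering step $k$, and that $k<k^*$ (and $k<K$). The updates run in a fixed order.
\begin{enumerate}
\item Transport, the stable sort, and the velocity update are deterministic maps of the state. They therefore produce identical positions, ranks, masses, and velocities under both algorithms. The masses are fixed, and the sort carries each mass with its particle.
\item Both couplings use rank matching, $\pi_k=\mathrm{id}$. The matched masses $m^A_j,m^B_j$ at stage $k$ are therefore the same in both runs.
\item Since $k<k^*$, every matched pair has $\sgn(m^A_j)=\sgn(m^B_j)$. Then \eqref{eq:switch} gives $\varepsilon_{k,j}=-1$ for all $j$, which is exactly the multiplier of $\refl$.
\item By \eqref{eq:coupling}, both runs then apply identical displacements $\sigma Z_{k,i}$ and $-\sigma Z_{k,i}$ to identical configurations, so the states after step $k$ agree.
\end{enumerate}
One point needs care. $k^*$ is defined from the switching run, or equivalently from either run, since they coincide up to that stage. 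Because the multipliers are computed after the sort of step $k$, the induction yields agreement of the states \emph{entering} stage $k^*$, including that step's transport, sort, and velocity updates. This is precisely the claimed range of coincidence. If no such stage occurs among $0,\dots,K-1$, the induction runs through all $K$ steps, and the complete trajectories coincide.

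\textbf{Monotone data.} If every initial mass has one sign, then, since the masses are fixed and only permuted by sorting, every particle of $A$ and of $B$ carries that sign at every stage. Hence no opposite-sign pair can ever arise, $k^*$ does not exist, and the previous paragraph applies to every realization.

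It remains to justify that monotone data give one-signed masses. For monotone $u_0$ with $u_+\ne u_-$, the derivative $u_0'$ does not change sign. So $\max(\mp u_0',0)$ vanishes for the opposite sign, and the initialization places all $N$ particles with density $|u_0'|/\mathrm{TV}(u_0)$. By \eqref{eq:total-mass} their common sign is that of $u_+-u_-$.

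The main obstacle is not mathematical depth but bookkeeping. One must be precise that the multipliers are $\Fil_k$-measurable and are computed after the sort of step $k$, and that $k^*$ is well defined independently of which run is used to detect it.
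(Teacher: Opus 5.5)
Your proof is correct and follows the paper's argument. It runs an induction on $k$ using three facts: transport, sort, and velocity updates are deterministic; both couplings match by rank; and \eqref{eq:switch} gives $\varepsilon_{k,j}=-1$ for every equal-sign pair, with fixed masses excluding opposite-sign pairs in the one-sign case. Your extra checks are harmless refinements. These are that $k^*$ is the same whichever run detects it, and that monotone data give one-signed masses. The second is really a hypothesis of the statement, since the paper's $N/2$-per-sign initialization is stated only for $u_+=u_-$.
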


\begin{proof}
Both couplings match by rank, and \eqref{eq:switch} sets $\varepsilon_{k,j}=-1$, the multiplier of reflected pairing, for every pair of equal signs. Before stage $k^*$ both give the same displacements to the same states, and the transport, sort, and velocity updates are deterministic, so induction on $k$ gives both claims. Masses are fixed, so if all share one sign no such stage occurs.
\end{proof}

\subsection{Variance reduction at the final stage}
\label{sec:finalstage}

The \emph{gain} of the final stage, after which no transport follows, is the reduction in the conditional variance of the terminal pair mean, integrated over $\R$, when that stage alone uses sign-switched instead of reflected pairing,
\[
 \Gamma=\int_{\R}\Bigl[\Var_{\refl}\bigl(\bar U(x)\mid\Fil_{K-1}\bigr)
 -\Var_{\swi}\bigl(\bar U(x)\mid\Fil_{K-1}\bigr)\Bigr]\mathrm dx.
\]
The subscript names the final rule, and conditioning on $\Fil_{K-1}$ fixes the paired state entering that stage. Proposition~\ref{prop:finalstage} expresses $\Gamma$ in closed form through partner separations by two classical facts. The first, the identity behind the energy distance of Sz{\'e}kely and Rizzo \citep[Section~2]{szekelyrizzo2013}, turns the integrated covariances of Proposition~\ref{prop:pairwise} into expected partner distances.

\begin{lemma}[integrated covariance and expected distance]
\label{lem:wholeline}
For integrable $X$ and $Y$ and a random variable $Y'$ with the law of $Y$, independent of $X$,
\begin{equation}
 \int_\R\operatorname{Cov}\bigl(\ind\{X\le x\},\ind\{Y\le x\}\bigr)\,\mathrm dx
 =\tfrac12\bigl(\E|X-Y'|-\E|X-Y|\bigr).
 \label{eq:covdist}
\end{equation}
\end{lemma}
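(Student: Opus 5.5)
The plan is to use the classical identity $\E|X-Y|=\int_\R\Pr\{X\le x<Y\}+\Pr\{Y\le x<X\}\,\mathrm dx$, valid for any pair of integrable random variables. It follows from $|X-Y|=\int_\R\bigl|\ind\{X\le x\}-\ind\{Y\le x\}\bigr|\,\mathrm dx$ and Tonelli's theorem, since the integrand is nonnegative. Taking expectations turns the integrand into $\E\bigl|\ind\{X\le x\}-\ind\{Y\le x\}\bigr|$. Because both indicators take values in $\{0,1\}$, this equals
\[
 F_X(x)+F_Y(x)-2\Pr\{X\le x,\ Y\le x\},
\]
where $F_X$ and $F_Y$ are the distribution functions of $X$ and $Y$.

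Next I apply the same identity to the pair $(X,Y')$. Since $Y'$ has the law of $Y$ and is independent of $X$, the joint probability factorizes as $\Pr\{X\le x,\ Y'\le x\}=F_X(x)F_Y(x)$. Subtracting the two identities, the marginal terms $F_X+F_Y$ cancel pointwise. This gives
\[
 \E|X-Y'|-\E|X-Y|=2\int_\R\bigl(\Pr\{X\le x,\ Y\le x\}-F_X(x)F_Y(x)\bigr)\,\mathrm dx .
\]
The integrand is exactly $\operatorname{Cov}(\ind\{X\le x\},\ind\{Y\le x\})$, and dividing by two gives \eqref{eq:covdist}.

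The only delicate step is justifying the subtraction, that is, showing that the integrals are finite so that no $\infty-\infty$ arises. Integrability of $X$ and $Y$ gives $\E|X-Y|\le\E|X|+\E|Y|<\infty$, and likewise $\E|X-Y'|<\infty$. Each of the two integrals in the Tonelli step therefore converges absolutely, with nonnegative integrands and finite total, so their difference may be taken under a single integral. As a consequence, the covariance integral on the left of \eqref{eq:covdist} converges absolutely, because it is bounded by the sum of the two integrands. I do not expect any further obstacle; the argument uses only Fubini--Tonelli and the Bernoulli identity $|\ind_A-\ind_B|=\ind_A+\ind_B-2\ind_{A\cap B}$.
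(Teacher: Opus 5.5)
Your proof is correct, and it takes a different route from the paper's.

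\textbf{The paper's route.} It anchors everything at the origin. It introduces $\eta_a(x)=\ind\{x\ge a\}-\ind\{x\ge 0\}$ and proves the deterministic kernel identity $\int_\R\eta_a\eta_b\,\mathrm dx=\tfrac12(|a|+|b|-|a-b|)$ by a case analysis on the relative positions of $a$, $b$, and $0$. It then writes the covariance as $\E[\eta_X\eta_Y]-\E[\eta_X\eta_{Y'}]$ and applies Fubini, justified by the separate bound $\int_\R|\eta_a\eta_b|\,\mathrm dx\le\min(|a|,|b|)$. This produces auxiliary terms $\E|X|+\E|Y|$ and $\E|X|+\E|Y'|$, which cancel because $\E|Y|=\E|Y'|$.

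\textbf{Your route.} You integrate the separation indicator $|\ind\{X\le x\}-\ind\{Y\le x\}|$ directly. As a result there is no anchor point, no case analysis, and no auxiliary first moments to cancel. Because the integrand is nonnegative, Tonelli alone gives finiteness from $\E|X-Y|<\infty$ and $\E|X-Y'|<\infty$. Absolute integrability of the covariance then comes out as a byproduct of subtracting two integrable functions, rather than from a dedicated bound.

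\textbf{What each buys.} The paper's version isolates a reusable deterministic kernel, the one behind the energy distance it cites from Sz\'ekely and Rizzo, and separates the deterministic computation from the probabilistic step. Yours is shorter and makes the mechanism transparent. The integrated covariance measures how much less often a point $x$ separates $X$ from $Y$ than it separates $X$ from an independent copy $Y'$. This is exactly what drives the comparison of reflection and synchronization in Proposition~\ref{prop:finalstage}.
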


\begin{proof}
For $a\in\R$ let $\eta_a(x)=\ind\{x\ge a\}-\ind\{x\ge 0\}$, supported between $a$ and $0$. We show that for all $a,b\in\R$,
\begin{equation}
  \int_{\R}\eta_a(x)\,\eta_b(x)\,\mathrm dx
  \;=\;\tfrac12\bigl(|a|+|b|-|a-b|\bigr).
  \label{eq:kernel}
\end{equation}
By symmetry we may assume $|a|\le|b|$. If $0\le a\le b$, then $\eta_a=\eta_b=-1$ on $[0,a)$, $\eta_a=0$ and $\eta_b=-1$ on $[a,b)$, and both are zero elsewhere. Hence $\int\eta_a\eta_b=|a|$, which equals $\tfrac12(|a|+|b|-|a-b|)$ because $|b|=|a|+|b-a|$. The case $0\ge a\ge b$ is identical. For $a$ and $b$ on opposite sides of $0$ the product is zero almost everywhere and $|a-b|=|a|+|b|$ makes the right-hand side zero.

For $Y'$ as in the statement,
\[
 \E[\eta_X(x)\eta_{Y'}(x)]=\E\eta_X(x)\,\E\eta_Y(x).
\]
Since $\ind\{X\le x\}=\eta_X(x)+\ind\{x\ge0\}$ and $\ind\{x\ge0\}$ is deterministic,
\[
 \operatorname{Cov}\bigl(\ind\{X\le x\},\ind\{Y\le x\}\bigr)
 =\E[\eta_X(x)\eta_Y(x)]-\E[\eta_X(x)\eta_{Y'}(x)].
\]
Because $\int_\R|\eta_a\eta_b|\le\min(|a|,|b|)$ and $X$ and $Y$ are integrable, Fubini's theorem and \eqref{eq:kernel} give
\[
 \begin{aligned}
 \int_\R\operatorname{Cov}\bigl(\ind\{X\le x\},\ind\{Y\le x\}\bigr)\,\mathrm dx
 &=\tfrac12\bigl(\E|X|+\E|Y|-\E|X-Y|\bigr)\\
 &\quad-\tfrac12\bigl(\E|X|+\E|Y'|-\E|X-Y'|\bigr).
 \end{aligned}
\]
Since $\E|Y|=\E|Y'|$, this is \eqref{eq:covdist}.
\end{proof}

The second fact is the expected absolute value of a Gaussian random variable, the mean of a folded normal distribution \citep[Eq.~(3)]{leone1961}. For $d\in\R$ and $\tau\ge0$,
\begin{equation}
 g_\tau(d)=\E\,|d+\tau Z|=2\tau\phi(d/\tau)+d\bigl(2\Phi(d/\tau)-1\bigr),
 \qquad Z\sim\mathcal N(0,1),
 \label{eq:folded}
\end{equation}
where the closed form holds for $\tau>0$.

\begin{proposition}[final-stage gain]
\label{prop:finalstage}
Fix the paired state entering the final diffusion stage, with all masses nonzero, and match particles by rank. For each rank $i$, let $a_i$ and $b_i$ be the positions of the particles of rank $i$ in $A$ and $B$, $m^A_i$ and $m^B_i$ their masses, and $d_i=a_i-b_i$ their separation. Then
\begin{equation}
  \Gamma
  \;=\;\frac14\!\!\sum_{i:\,m^A_i m^B_i<0}\!\!
    \bigl|m^A_i m^B_i\bigr|\,
    \Bigl[g_{2\sigma}(d_i)-|d_i|\Bigr]\;\ge\;0 ,
  \label{eq:finalstage}
\end{equation}
and $\Gamma>0$ whenever some matched pair has masses of opposite sign.
\end{proposition}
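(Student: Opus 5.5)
Conditioning on $\Fil_{K-1}$ fixes the paired state entering the final stage. After that stage no transport or sort follows, so the terminal fields are just the positions after displacement, read through \eqref{eq:representation}. Under both reflected and sign-switched pairing the matching is rank matching, and distinct pairs receive independent displacements. The conditional variance of $\bar U(x)$ therefore decomposes pairwise as in \eqref{eq:pairvar}. The variance terms $(m^A_i)^2p_i(1-p_i)$ and $(m^B_i)^2q_i(1-q_i)$ depend only on the marginal laws, so they cancel in the difference. Pairs with $m^A_im^B_i>0$ receive the same multiplier $-1$ under both rules, so they cancel as well. What remains is
\[
 \Var_{\refl}-\Var_{\swi}=\tfrac12\sum_{i:\,m^A_im^B_i<0}m^A_im^B_i\bigl[\operatorname{Cov}_{\refl}(\chi^A_i,\chi^B_i)-\operatorname{Cov}_{\swi}(\chi^A_i,\chi^B_i)\bigr].
\]
For these pairs $m^A_im^B_i=-|m^A_im^B_i|$, and switching synchronizes them.

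Next I integrate over $x$ and apply Lemma~\ref{lem:wholeline} to each covariance. Both $X^A_i=a_i+\sigma Z^A_i$ and $X^B_i=b_i+\sigma Z^B_i$ are integrable. The term $\E|X-Y'|$ with $Y'$ independent has the same value under both couplings, so it cancels. The difference of integrated covariances is then $\tfrac12\bigl(\E_{\swi}|X^A_i-X^B_i|-\E_{\refl}|X^A_i-X^B_i|\bigr)$.

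Under synchronization the separation stays exactly $d_i$, so $\E_{\swi}|X^A_i-X^B_i|=|d_i|$. Under reflection it becomes $d_i+2\sigma Z^A_i$, so $\E_{\refl}|X^A_i-X^B_i|=g_{2\sigma}(d_i)$ by \eqref{eq:folded}. Combining the factors, each pair contributes $\tfrac12\cdot(-|m^A_im^B_i|)\cdot\tfrac12\bigl(|d_i|-g_{2\sigma}(d_i)\bigr)$, which equals $\tfrac14|m^A_im^B_i|\bigl(g_{2\sigma}(d_i)-|d_i|\bigr)$. This gives \eqref{eq:finalstage}.

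The interchange of the finite sum with the integral needs justification. The integrand for each pair is integrable over $\R$, since Lemma~\ref{lem:wholeline} already handles it through Fubini. The point to check is that the individual variance terms need not be integrated separately: they cancel pointwise before integration, so their own integrability never matters.

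For nonnegativity and strictness, Jensen's inequality gives $g_{2\sigma}(d)=\E|d+2\sigma Z|\ge|\E(d+2\sigma Z)|=|d|$. Equality would require $d+2\sigma Z$ to keep one sign almost surely, which is impossible for a Gaussian with $\sigma>0$. So each term with $m^A_im^B_i<0$ is strictly positive, since its masses are nonzero. Alternatively, the closed form gives $g_{2\sigma}(d)-|d|=2\tau\phi(d/\tau)-2|d|\Phi(-|d|/\tau)$ with $\tau=2\sigma$, which is positive by the Mills-ratio bound. The main obstacle is the sign bookkeeping in combining the weight $2m^A_im^B_i$, the factor $\tfrac14$, and the $\tfrac12$ from Lemma~\ref{lem:wholeline}; I would verify it against the pointwise extremal values $\gamma^\pm_i$ of Proposition~\ref{prop:pairwise}.
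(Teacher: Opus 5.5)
Your proposal is correct and follows the paper's proof step for step. Both proofs decompose the conditional variance pairwise, cancel the marginal terms and the equal-sign pairs, use Lemma~\ref{lem:wholeline} to turn integrated covariances into expected partner distances $g_{2\sigma}(d_i)$ and $|d_i|$, and apply Jensen's inequality for strict positivity. The only difference is in integrability: the paper notes that the variance terms have Gaussian tails, while you observe that they cancel pointwise. Your version is enough here, since $\Gamma$ is defined as the integral of the difference.
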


\begin{proof}
Given the paired state entering the final stage, both rules match by rank and displacements of different ranks are independent, so the conditional variance of the terminal pair mean is the sum over ranks of \eqref{eq:pairvar}. Its variance terms depend only on the marginal distributions and are integrable in $x$ for any total mass \eqref{eq:total-mass}, since $\Var\chi^A_i=p_i(1-p_i)$ has Gaussian tails. They cancel in the difference, which leaves
\[
 \Gamma=\tfrac12\sum_i m^A_im^B_i\int_\R\Bigl[\operatorname{Cov}_{\refl}\bigl(\chi^A_i,\chi^B_i\bigr)
 -\operatorname{Cov}_{\swi}\bigl(\chi^A_i,\chi^B_i\bigr)\Bigr]\mathrm dx .
\]
Pairs of equal sign are reflected by both rules and drop out. For a pair of opposite sign, \eqref{eq:covdist} gives under each rule
\[
 \int_\R\operatorname{Cov}\bigl(\chi^A_i,\chi^B_i\bigr)\,\mathrm dx
 =\tfrac12\bigl(\E|X^A_i-X'^B_i|-\E|X^A_i-X^B_i|\bigr),
\]
where $X'^B_i$ is a copy of $X^B_i$ independent of $X^A_i$. The term $\E|X^A_i-X'^B_i|$ depends only on the marginal distributions, so the rules differ only through the expected partner distance $\E|X^A_i-X^B_i|$ after the stage. Reflection gives $X^A_i-X^B_i=d_i+2\sigma Z_{K-1,i}$, and synchronization cancels the displacements in this difference, so
\[
 \E|X^A_i-X^B_i|=
 \begin{cases}
  g_{2\sigma}(d_i), & \text{under reflection},\\
  |d_i|, & \text{under synchronization}.
 \end{cases}
\]
Hence
\[
 \begin{aligned}
 \Gamma&=\tfrac12\!\!\sum_{i:\,m^A_im^B_i<0}\!\! m^A_im^B_i\cdot
 \tfrac12\bigl(|d_i|-g_{2\sigma}(d_i)\bigr)\\
 &=\tfrac14\!\!\sum_{i:\,m^A_im^B_i<0}\!\!\bigl|m^A_im^B_i\bigr|
 \bigl(g_{2\sigma}(d_i)-|d_i|\bigr),
 \end{aligned}
\]
which is \eqref{eq:finalstage}. Non-negativity follows from Jensen's inequality, $g_\tau(d)=\E|d+\tau Z|\ge|\E(d+\tau Z)|=|d|$, which is strict for $\tau>0$ because $d+\tau Z$ takes both signs with positive probability.
\end{proof}

Each term of \eqref{eq:finalstage} is largest for partners at the same position and decays strictly to zero as they separate, so the final-stage gain comes from close partners of opposite sign. The gap $g_\tau(d)-|d|$ is even in $d$, and by \eqref{eq:folded}, for $d\ge0$ and $\tau>0$,
\begin{equation}
 g_\tau(d)-d=2\tau\phi(d/\tau)-2d\,\Phi(-d/\tau),
 \qquad
 \partial_d\bigl(g_\tau(d)-d\bigr)
 =2\Phi(d/\tau)-2<0,
 \label{eq:gapdecay}
\end{equation}
because the terms containing $\phi$ cancel. The gap therefore decreases strictly in $|d|$, from its value at $d=0$ to its limit,
\[
 g_\tau(0)=\tau\sqrt{2/\pi},
 \qquad
 \lim_{|d|\to\infty}\bigl(g_\tau(d)-|d|\bigr)=0,
\]
and with $\tau=2\sigma$ the bracket in \eqref{eq:finalstage} equals $2\sigma\sqrt{2/\pi}$ at $d_i=0$.

The law of total variance turns this conditional gain into a reduction of the terminal variance.

\begin{corollary}[final-stage switching]
\label{cor:finalonly}
For any flux, the coupling that switches at the final stage alone, after reflection at every earlier stage, lowers the terminal variance of reflected pairing, integrated over $\R$, by exactly $\E\,\Gamma$.
\end{corollary}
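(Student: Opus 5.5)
The plan is to apply the law of total variance to the terminal pair mean $\bar U(x)$, conditioning on $\Fil_{K-1}$, and then integrate over $x$. Under both rules (reflection at every stage, and reflection before the last stage with switching at the last stage) stages $0,\dots,K-2$ use the same admissible coupling. Driving both rules with the same draws $Z_0,\dots,Z_{K-2}$, the paired state entering the final stage is therefore the same $\Fil_{K-1}$-measurable random object under both, and so has the same law. Only the final-stage multipliers differ, and these are $\Fil_{K-1}$-measurable, so both couplings are admissible in the sense of Definition~\ref{def:coupling}.

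For each $x$, write
\[
\Var_\bullet\bar U(x)=\E\bigl[\Var_\bullet(\bar U(x)\mid\Fil_{K-1})\bigr]+\Var\bigl(\E_\bullet[\bar U(x)\mid\Fil_{K-1}]\bigr).
\]
The key observation is that the conditional mean does not depend on the final rule. Given $\Fil_{K-1}$, the field after the final stage is a sum over ranks of terms $m^A_i\chi^A_i+m^B_i\chi^B_i$. The conditional expectation of each term involves only the marginal success probabilities $p_i,q_i$, and these are Gaussian-marginal quantities that are the same under reflection and synchronization. This is the same point made in the proof of Proposition~\ref{prop:pairwise}. Hence the second term is common to both rules, and the pointwise difference of terminal variances equals $\E[\Var_{\refl}(\bar U(x)\mid\Fil_{K-1})-\Var_{\swi}(\bar U(x)\mid\Fil_{K-1})]$.

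Next, integrate over $x\in\R$ and exchange the order with $\E$ (Tonelli/Fubini), which gives $\E\,\Gamma$ by the definition of $\Gamma$. The main obstacle is justifying this interchange. The conditional-variance difference equals $\tfrac12\sum_i m^A_im^B_i$ times a covariance difference, where the sum runs over opposite-sign pairs. This difference is not of one sign pointwise, but it is bounded by $\sum_i|m^A_im^B_i|\bigl(|\gamma^+_i|+|\gamma^-_i|\bigr)$. Each $\gamma^\pm_i(x)$ is at most $\min(p_i,1-p_i)$ and so has Gaussian tails. Its integral over $x$ is at most $\E|X^A_i-a_i|+\E|X^B_i-b_i|+|a_i-b_i|$ plus constants, i.e. $O(\sigma+|d_i|)$. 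This can be shown either directly or via Lemma~\ref{lem:wholeline}, using $\E|X-Y'|\le\E|X|+\E|Y|$.

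Finally, check that $\E\,\Gamma<\infty$. The bracket in \eqref{eq:finalstage} is bounded by $2\sigma\sqrt{2/\pi}$ by \eqref{eq:gapdecay}, and the masses are fixed, so $\Gamma$ is bounded. A similar bound controls the integrability of the remaining terms. Each terminal variance is separately finite when integrated, because its variance parts have Gaussian tails around positions that themselves have finite first moments (the transport velocities are bounded, as $f'$ is continuous on the bounded range of $U$). This makes the subtraction of the integrated variances legitimate, and the difference is exactly $\E\,\Gamma$.
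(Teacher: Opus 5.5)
Your proposal is correct and follows the paper's proof: the law of total variance given $\Fil_{K-1}$, a conditional mean common to both final rules, and integration of the expected conditional-variance difference, with integrability checks that the paper leaves implicit. One correction: you claim the conditional-variance difference is not of one sign pointwise, but by Proposition~\ref{prop:pairwise} each opposite-sign pair has covariance $\gamma^-_i(x)$ under reflection and $\gamma^+_i(x)$ under synchronization, so the difference is nonnegative at every $x$ and Tonelli's theorem applies directly, which makes your domination bound valid but unnecessary.
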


\begin{proof}
This coupling coincides with reflected pairing before the final stage, and both final rules give the same conditional mean. Hence, in the law of total variance
\[
 \Var\bar U(x)=\E\,\Var\bigl(\bar U(x)\mid\Fil_{K-1}\bigr)+\Var\,\E\bigl[\bar U(x)\mid\Fil_{K-1}\bigr]
\]
only the expected conditional variance depends on the final rule, and its difference integrates over $\R$ to $\E\,\Gamma$.
\end{proof}

\subsection{Stage-by-stage gain for the heat equation}
\label{sec:everystage}

Extending Proposition~\ref{prop:pairwise} to every stage uses the field's monotone response to each displacement (Section~\ref{sec:motivation}), which persists until the final time for the heat equation. With $f\equiv0$ the particles do not interact, and a particle's displacement over the stages after stage $k$ is Gaussian with standard deviation
\[
 s_k=\sigma\sqrt{K-k-1},
\]
so that $s_{K-1}=0$. Conditioning on its position $X_i$ after stage $k$ replaces its terminal indicator by
\[
 \E\bigl[\ind\{X_i+s_kG\le x\}\bigm|X_i\bigr]=\Phi\Bigl(\frac{x-X_i}{s_k}\Bigr),
\]
where $G\sim\mathcal N(0,1)$ is independent of $X_i$. When $s_k=0$ we use the indicator itself, and the argument of Proposition~\ref{prop:pairwise} applies to these smoothed indicators.

Let $M_k(x)=\E[\bar U(x)\mid\Fil_k]$ be the conditional mean of the terminal pair mean. The martingale decomposition in the proof below splits the terminal variance into stage contributions,
\[
 \Var\bar U(x)=\sum_{k=0}^{K-1}\E\,\mathcal V_k(x),\qquad \mathcal V_k(x)=\Var\bigl(M_{k+1}(x)\bigm|\Fil_k\bigr),
\]
where $\mathcal V_k(x)$ is the variance that the draw of stage $k$ creates in the conditional mean. Proposition~\ref{prop:everystage} shows that, given the paired state entering stage $k$ and rank matching, the sign rule minimizes each pair's contribution to $\mathcal V_k$, with a closed-form gain.

\begin{proposition}[stage-by-stage gain for the heat equation]
\label{prop:everystage}
Let $f\equiv0$ and consider admissible couplings with $\pi_k=\mathrm{id}$. Given $\Fil_k$, let $a_i$, $b_i$, $m^A_i$, $m^B_i$, and $d_i=a_i-b_i$ be as in Proposition~\ref{prop:finalstage} for the state entering stage $k$. Among all joint laws of $(Z^A_i,Z^B_i)$ at stage $k$ with standard normal marginal distributions, the displacements of distinct matched pairs being independent, the sign rule minimizes the contribution of every matched pair to $\mathcal V_k(x)$, simultaneously for all $x\in\R$. Its gain over reflected pairing at stage $k$, the integral over $x$ of $\mathcal V_k(x)$ under reflection at stage $k$ minus $\mathcal V_k(x)$ under the sign rule at stage $k$, is
\begin{equation}
 \Gamma_k=\frac14\!\!\sum_{i:\,m^A_im^B_i<0}\!\!|m^A_im^B_i|\Bigl[g_{\sqrt{4\sigma^2+2s_k^2}}(d_i)-g_{\sqrt2\,s_k}(d_i)\Bigr]\;\ge\;0,
 \label{eq:everystage}
\end{equation}
which decreases strictly in each $|d_i|$ with $m^A_im^B_i<0$ and equals \eqref{eq:finalstage} for $k=K-1$.
\end{proposition}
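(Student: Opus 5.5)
For $f\equiv0$ there is no transport and no interaction. The sort only relabels particles, and the field is a sum over particles whatever their order. So, given the state after stage $k$, every particle of $A$ and $B$ diffuses independently through the remaining stages with total displacement $\mathcal N(0,s_k^2)$. I would first use this to compute the conditional mean $M_{k+1}(x)$. With $\Fil_k$ fixed, the particles of rank $i$ leave stage $k$ at $X^A_i=a_i+\sigma Z^A_i$ and $X^B_i=b_i+\sigma Z^B_i$, and the tower property together with the smoothing formula preceding the statement gives
\[
 M_{k+1}(x)=u_-+\tfrac12\sum_{i}\Bigl[m^A_i\,\Phi\Bigl(\tfrac{x-X^A_i}{s_k}\Bigr)+m^B_i\,\Phi\Bigl(\tfrac{x-X^B_i}{s_k}\Bigr)\Bigr],
\]
with indicators in place of $\Phi$ when $s_k=0$. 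The martingale decomposition $\Var\bar U(x)=\sum_k\E\mathcal V_k(x)$ is the standard orthogonality of martingale increments $M_{k+1}-M_k$; I would record it briefly. Because the pairs are independent given $\Fil_k$, $\mathcal V_k(x)$ is a sum over pairs of the analogue of \eqref{eq:pairvar}, with $\chi^A_i,\chi^B_i$ replaced by the smoothed indicators $\psi^A_i=\Phi((x-X^A_i)/s_k)$ and $\psi^B_i=\Phi((x-X^B_i)/s_k)$.

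For the optimality claim, I would write $\psi^A_i=\ind\{X^A_i+s_kG\le x\}$ averaged over an independent $G$, and likewise $\psi^B_i$ with an independent $G'$. Then $\psi^A_i$ is a nonincreasing function of $Z^A_i$ and $\psi^B_i$ a nonincreasing function of $Z^B_i$, and the variance terms depend only on the marginals. The cross term is $2m^A_im^B_i\operatorname{Cov}(\psi^A_i,\psi^B_i)$. By the Hoeffding covariance identity, $\operatorname{Cov}(h_1(Z^A),h_2(Z^B))$ for monotone $h_1,h_2$ is extremal under the comonotone coupling $Z^B=Z^A$ (maximal when both functions are nonincreasing) and the antitone coupling $Z^B=-Z^A$ (minimal). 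This is the same Fréchet--Hoeffding argument as in Proposition~\ref{prop:pairwise}, applied to the smoothed indicators. The sign rule therefore picks the minimizing endpoint for every $x$ at once.

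For the gain, I would proceed as in Proposition~\ref{prop:finalstage}. Only pairs of opposite sign differ between the two rules, and the difference is $\tfrac12 m^A_im^B_i\int[\operatorname{Cov}_{\refl}-\operatorname{Cov}_{\swi}]\,\mathrm dx$. To apply Lemma~\ref{lem:wholeline}, I would realize the smoothed covariance as a covariance of true indicators: $\operatorname{Cov}(\psi^A_i,\psi^B_i)=\operatorname{Cov}(\ind\{\tilde X^A\le x\},\ind\{\tilde X^B\le x\})$ with $\tilde X^A=X^A_i+s_kG$ and $\tilde X^B=X^B_i+s_kG'$, where $G$ and $G'$ are independent of each other and of the stage-$k$ draws. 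The lemma then reduces everything to $\E|\tilde X^A-\tilde X^B|$, since the independent-copy term depends only on marginals. Under reflection $\tilde X^A-\tilde X^B=d_i+2\sigma Z+s_k(G-G')$, which is $\mathcal N(d_i,4\sigma^2+2s_k^2)$; under synchronization it is $d_i+s_k(G-G')\sim\mathcal N(d_i,2s_k^2)$. By \eqref{eq:folded} the expected distances are $g_{\sqrt{4\sigma^2+2s_k^2}}(d_i)$ and $g_{\sqrt2 s_k}(d_i)$, which gives \eqref{eq:everystage}; for $k=K-1$ we have $s_k=0$ and $g_0(d)=|d|$, recovering \eqref{eq:finalstage}. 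Integrability in $x$ follows because $\Var\psi$ has Gaussian tails, as before.

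The main obstacle is the remaining claims about the sign and the strict decrease of the bracket $g_{\tau_1}(d)-g_{\tau_0}(d)$, where $\tau_1>\tau_0\ge0$. For $d\ge0$ and $\tau>0$, differentiating \eqref{eq:folded} gives $\partial_d g_\tau(d)=2\Phi(d/\tau)-1$, since the $\phi$ terms cancel. The derivative of the bracket is therefore $2[\Phi(d/\tau_1)-\Phi(d/\tau_0)]<0$ for $d>0$, with the case $\tau_0=0$ handled by \eqref{eq:gapdecay}. Evenness in $d$ then gives strict decrease in $|d_i|$. Nonnegativity comes from the limit $g_{\tau_1}(d)-g_{\tau_0}(d)\to0$ as $|d|\to\infty$, which holds since $g_\tau(d)-|d|\to0$ for each $\tau$; alternatively, Jensen's inequality gives it directly, since $d+\tau_1Z\overset{d}{=}(d+\tau_0Z)+\sqrt{\tau_1^2-\tau_0^2}\,Z''$ with $Z''$ independent and centered.
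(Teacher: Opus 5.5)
Your proposal is correct and follows the paper's proof essentially step for step: the conditional mean built from the remaining $\mathcal N(0,s_k^2)$ displacements, the orthogonality of martingale increments, the Fr\'echet--Hoeffding extremality of reflection and synchronization for the smoothed indicators, and Lemma~\ref{lem:wholeline} applied to $X^A_i+s_kG$ and $X^B_i+s_kG'$. There are two minor differences. First, the paper justifies the remaining-displacement law explicitly through the permutation argument of Proposition~\ref{prop:law}: later draws are assigned by rank and multiplied by $\Fil_l$-measurable signs, a point your phrase ``diffuses independently'' glosses over, although only each particle's marginal matters for the conditional mean. Second, the paper gets positivity of the bracket from $\partial_\tau g_\tau(d)=2\phi(d/\tau)>0$ rather than from your vanishing-limit or conditional-Jensen argument.
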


\begin{proof}
\textbf{Conditional mean.} When $f\equiv0$, the transport and velocity updates leave the positions unchanged and sorting only relabels them, so we follow each particle $p$ of $A$, with its fixed mass $m_p$, by identity. At stage $l$ it receives $\sigma\zeta_{l,p}$, where $\zeta_l=\Pi^A_lZ_l$ and $\Pi^A_l$ is the $\Fil_l$-measurable permutation matrix that gives each particle of $A$ the draw of its rank. By the proof of Proposition~\ref{prop:law}, $\zeta_l\sim\mathcal N(0,I_N)$ is independent of $\Fil_l$, which contains $\Fil_{k+1}$ and $\zeta_{k+1},\dots,\zeta_{l-1}$ when $l>k$. Given $\Fil_{k+1}$, the remaining displacements $\sigma\sum_{l>k}\zeta_{l,p}$ are therefore independent $\mathcal N(0,s_k^2)$ variables, and
\[
 \E\bigl[U^A(x)\mid\Fil_{k+1}\bigr]=u_-+\sum_p m_p\,\Phi\Bigl(\frac{x-X^A_p}{s_k}\Bigr),
\]
where $X^A_p$ is the position of particle $p$ after stage $k$, and for $k=K-1$ the sum is $U^A(x)$ itself. The same holds for $B$ with $\zeta_l=\Pi^B_lD_lZ_l$, where $\Pi^B_l$ does the same for $B$ and $D_l=\operatorname{diag}(\varepsilon_l)$.

\textbf{Decomposition.} Since $|\bar U|\le|u_-|+\sum_i|m_i|$, $(M_k)_{k=0}^K$ is a bounded martingale, $\E[M_{k+1}\mid\Fil_k]=M_k$, from $M_0=\E\bar U(x)$ to $M_K=\bar U(x)$. Its increments are therefore uncorrelated, so $\Var\bar U(x)$ is the sum of their second moments, and $\E[(M_{k+1}(x)-M_k(x))^2\mid\Fil_k]=\mathcal V_k(x)$ gives the stage decomposition.

\textbf{Optimality.} Given $\Fil_k$, index the particles by rank entering stage $k$. Then
\[
 \begin{aligned}
 M_{k+1}(x)&=u_-+\tfrac12\sum_i\bigl(m^A_i\psi^A_i+m^B_i\psi^B_i\bigr),\\
 \psi^A_i&=\Phi\Bigl(\frac{x-a_i-\sigma Z^A_i}{s_k}\Bigr),\qquad
 \psi^B_i=\Phi\Bigl(\frac{x-b_i-\sigma Z^B_i}{s_k}\Bigr),
 \end{aligned}
\]
with $Z^A_i=Z_{k,i}$ and $Z^B_i=\varepsilon_{k,i}Z_{k,i}$, and $\psi=\chi$ when $s_k=0$. Distinct pairs are conditionally independent, so $\mathcal V_k(x)$ is the sum over $i$ of \eqref{eq:pairvar} with $\chi$ replaced by $\psi$, and only $\operatorname{Cov}(\psi^A_i,\psi^B_i)$ depends on the coupling. Each factor $\psi^A_i$, $\psi^B_i$ averages over the remaining displacements of one particle, whatever coupling the later stages use. To write the product $\psi^A_i\psi^B_i$ as one probability, we represent the two averages by standard normal random variables $G$ and $G'$, independent of each other and of $Z^A_i$ and $Z^B_i$, and set
\[
 X=a_i+\sigma Z^A_i+s_kG,\qquad Y=b_i+\sigma Z^B_i+s_kG'.
\]
Then $\psi^A_i\psi^B_i=\Pr\{X\le x,\,Y\le x\mid Z^A_i,Z^B_i\}$, so
\[
 \begin{aligned}
 \operatorname{Cov}\bigl(\psi^A_i,\psi^B_i\bigr)&=\operatorname{Cov}\bigl(\ind\{X\le x\},\ind\{Y\le x\}\bigr),\\
 \E[\psi^A_i\psi^B_i]&=\E_{G,G'}\Pr\bigl\{\sigma Z^A_i\le x-a_i-s_kG,\ \sigma Z^B_i\le x-b_i-s_kG'\bigr\}.
 \end{aligned}
\]
For each $(G,G')$ the probability is the joint distribution function of $(\sigma Z^A_i,\sigma Z^B_i)$ at two thresholds, so it lies between the Fr{\'e}chet--Hoeffding bounds \eqref{eq:frechet}. Reflection and synchronization attain these bounds at all thresholds at once, so the covariance lies between its reflected and synchronized values at every $x$ and the proof of Proposition~\ref{prop:pairwise} applies.

\textbf{Gain.} Each term of $\mathcal V_k$ is integrable in $x$, since $\Var\psi^A_i\le\mu(1-\mu)$ with $\mu=\Phi\bigl((x-a_i)/(\sigma^2+s_k^2)^{1/2}\bigr)$. By \eqref{eq:covdist}, $\int_\R\operatorname{Cov}(\psi^A_i,\psi^B_i)\,\mathrm dx=\tfrac12(\E|X-Y'|-\E|X-Y|)$, where $\E|X-Y'|$ depends only on the marginal distributions. The difference $X-Y$ is Gaussian under both rules,
\[
 \begin{aligned}
 X-Y&=d_i+2\sigma Z^A_i+s_k(G-G') &&\text{under reflection},\\
     &\qquad\sim\mathcal N\bigl(d_i,4\sigma^2+2s_k^2\bigr),\\
 X-Y&=d_i+s_k(G-G') &&\text{under synchronization},\\
     &\qquad\sim\mathcal N\bigl(d_i,2s_k^2\bigr).
 \end{aligned}
\]
Pairs of equal sign drop out. Since $\E|X-Y|=g_\tau(d_i)$ when $X-Y\sim\mathcal N(d_i,\tau^2)$, the weight $\tfrac12m^A_im^B_i$ of the covariance in $\mathcal V_k$ and the factor $\tfrac12$ in \eqref{eq:covdist} give \eqref{eq:everystage}. Let $\tau_1=(4\sigma^2+2s_k^2)^{1/2}>\tau_2=\sqrt2\,s_k$. The bracket is positive because $\partial_\tau g_\tau(d)=2\phi(d/\tau)>0$, so $\Gamma_k>0$ whenever partners of opposite sign enter stage $k$. For $d>0$ the $d$-derivative of the bracket, $2\Phi(d/\tau_1)-2\Phi(d/\tau_2)$ by \eqref{eq:gapdecay}, is negative. For $k=K-1$, $s_k=0$ and $g_0(d)=|d|$ reduce \eqref{eq:everystage} to \eqref{eq:finalstage}.
\end{proof}

Proposition~\ref{prop:everystage} compares couplings from the same paired state entering each stage, a state whose law depends on earlier couplings, and Example~3 measures the terminal variances of complete simulations (Section~\ref{sec:structure}).

\begin{remark}[loss of monotonicity under Burgers' flux]
\label{rem:reversal}
For $f(u)=u^2/2$ reordering can reverse the monotone response that the stage-by-stage analysis uses. Take two particles with masses $(1/2,-1/2)$ at initial positions $(\xi,0)$, $u_-=0$, $h=0.005$, $\nu=10^{-10}$, and two steps, and compare
\[
 \xi_1=-0.00275\qquad\text{and}\qquad\xi_2=-0.00225.
\]

Without diffusion, the field at $x_*=0.00125$ rises from $-1/2$ to $0$ as $\xi$ increases from $\xi_1$ to $\xi_2$. Both states start with velocities $(1/2,0)$, and the first transport moves the positive particle past the negative one for $\xi_2$ alone.

Re-sorting keeps the velocities $(1/2,0)$ for $\xi_1$. For $\xi_2$ the negative particle now ranks first, so it receives the velocity $-1/2$ and the positive particle receives $0$. With the positive particle listed first, the two configurations evolve as
\[
 \begin{array}{@{}l@{\quad}c@{\quad}c@{\quad}c@{\quad}c@{}}
  & \text{first transport} & \text{updated velocities} & \text{second transport} & U(x_*)\\[2pt]
  \xi_1 & (-0.00025,\,0) & (1/2,\,0) & (0.00225,\,0) & -1/2\\
  \xi_2 & (0.00025,\,0) & (0,\,-1/2) & (0.00025,\,-0.0025) & 0
 \end{array}
\]
For $\xi=-0.01$ the positions after the second transport are $(-0.005,0)$, with field $0$ at $x_*$, so the field falls from $\xi=-0.01$ to $\xi_1$.

With the displacements restored and $\sigma=10^{-6}$, on the event that the four displacements of a simulation have modulus at most $10^{-4}$, the second transport uses the velocities found without diffusion, and each terminal position shifts by at most $2\cdot10^{-4}$. Every terminal position without diffusion lies at least $0.001$ from $x_*$, so no particle crosses $x_*$ and the field values above hold on this event. Its complement has probability at most $\theta=8\Phi(-100)<\exp(-5003)$ and every field value lies in $[-1/2,1/2]$, so the terminal field $U(x_*,2h;\xi)$ started from $\xi$ satisfies $\E\,U(x_*,2h;\xi_1)\le-1/2+\theta$ and $\E\,U(x_*,2h;\xi_2)\ge-\theta/2$. Subtracting gives
\[
 \E\,U(x_*,2h;\xi_2)-\E\,U(x_*,2h;\xi_1)\ge\tfrac12-\tfrac32\theta>0 .
\]
Moving the positive mass to the right raises the expected terminal field, reversing the response that Proposition~\ref{prop:everystage} uses.
\end{remark}

\subsection{Matching and partner distance}
\label{sec:matching}

The distance dependence of the per-pair gains motivates comparing the total partner distance of the two matchings.

\begin{remark}[rank matching minimizes total partner distance]
\label{rem:sorted}
For real numbers $a_1\le\dots\le a_N$ and $b_1\le\dots\le b_N$, the identity matching minimizes $\sum_j|a_{\pi(j)}-b_j|$ over all bijections $\pi$. This is the monotone rearrangement of one-dimensional optimal transport \citep[Section~2.2, Theorem~2.18 and Remarks~2.19]{villani2003}, and here it follows by removing crossed pairs with
\[
 |a-b|+|a'-b'|\le |a-b'|+|a'-b|,
 \qquad a\le a',\quad b\le b'.
\]
Rank matching is this identity matching, and within-sign matching, with bijection $\pi_{\wit}$, restricts the allowed partners. Consequently,
\begin{equation}
  \sum_j\bigl|a_j-b_j\bigr|\;\le\;\sum_j\bigl|a_{\pi_{\wit}(j)}-b_j\bigr|
  \label{eq:sortedopt}
\end{equation}
for every pair of configurations, including ties.
\end{remark}

Rank and within-sign matching form different pairs, so we measure each pair against a common reference.

\begin{corollary}[reduction and partner distance]
\label{cor:distance}
Fix the paired state entering a diffusion stage, and let a matched pair have masses $m^A$ and $m^B$ and separation $d$. Relative to independent displacements for the two partners, the conditional variance of $\bar U$ just after the stage, integrated over $x$, is reduced by
\[
 \begin{aligned}
 &\tfrac14|m^Am^B|\,\bigl[g_{2\sigma}(d)-g_{\sqrt2\sigma}(d)\bigr]\\
 &\qquad\text{for a reflected pair of equal signs},\\[3pt]
 &\tfrac14|m^Am^B|\,\bigl[g_{\sqrt2\sigma}(d)-|d|\bigr]\\
 &\qquad\text{for a synchronized pair of opposite signs}.
 \end{aligned}
\]
Both reductions decrease as $|d|$ grows.
\end{corollary}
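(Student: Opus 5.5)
The plan is to reuse the machinery of the proof of Proposition~\ref{prop:finalstage}, applied to a single matched pair and to three couplings instead of two. Fix the paired state entering the stage and the matched pair, with entry positions $a$ and $b$, so $d=a-b$. By \eqref{eq:pairvar}, the pair's contribution to the conditional variance of $\bar U(x)$ just after the stage consists of two variance terms and the term $\tfrac12 m^Am^B\operatorname{Cov}(\chi^A,\chi^B)$. The variance terms depend only on the Gaussian marginal distributions, which are the same for every coupling considered. The reduction relative to independent displacements is therefore
\[
 \tfrac12 m^Am^B\int_\R\bigl[\operatorname{Cov}_{\mathrm{ind}}(\chi^A,\chi^B)-\operatorname{Cov}(\chi^A,\chi^B)\bigr]\mathrm dx .
\]
Here $\operatorname{Cov}_{\mathrm{ind}}=0$. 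Integrability in $x$ holds, as in Proposition~\ref{prop:finalstage}, because the indicators' variances have Gaussian tails.

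Next I apply Lemma~\ref{lem:wholeline} with $X=X^A=a+\sigma Z^A$ and $Y=X^B=b+\sigma Z^B$. This gives
\[
 \int_\R\operatorname{Cov}(\chi^A,\chi^B)\,\mathrm dx=\tfrac12\bigl(\E|X^A-X'^B|-\E|X^A-X^B|\bigr),
\]
where $X'^B$ is an independent copy of $X^B$. The difference $X^A-X'^B$ is $\mathcal N(d,2\sigma^2)$, so the first expectation equals $g_{\sqrt2\sigma}(d)$ by \eqref{eq:folded}. The second expectation depends on the coupling:
\begin{itemize}
\item under reflection, $X^A-X^B=d+2\sigma Z^A$, so it equals $g_{2\sigma}(d)$;
\item under synchronization, $X^A-X^B=d$, so it equals $|d|$.
\end{itemize}
Substituting, the reduction is $\tfrac14 m^Am^B\bigl[g_{\sqrt2\sigma}(d)-\E|X^A-X^B|\bigr]$ with the sign reversed. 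For equal signs under reflection, $m^Am^B=|m^Am^B|$ and the reduction is $\tfrac14|m^Am^B|\bigl[g_{2\sigma}(d)-g_{\sqrt2\sigma}(d)\bigr]$. For opposite signs under synchronization, $m^Am^B=-|m^Am^B|$, which flips the sign and yields $\tfrac14|m^Am^B|\bigl[g_{\sqrt2\sigma}(d)-|d|\bigr]$. Both brackets are nonnegative because $\partial_\tau g_\tau>0$ and $g_\tau(d)\ge|d|$ by Jensen's inequality.

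For the monotonicity in $|d|$, I would use evenness of $g_\tau$ and restrict to $d\ge0$. By \eqref{eq:folded}, $\partial_d g_\tau(d)=2\Phi(d/\tau)-1$, because the $\phi$ terms cancel as in \eqref{eq:gapdecay}. This gives the two derivatives
\[
 \partial_d\bigl[g_{2\sigma}-g_{\sqrt2\sigma}\bigr]=2\Phi(d/2\sigma)-2\Phi(d/\sqrt2\sigma)<0,
 \qquad
 \partial_d\bigl[g_{\sqrt2\sigma}(d)-d\bigr]=2\Phi(d/\sqrt2\sigma)-2<0
\]
for $d>0$. Both reductions therefore decrease strictly in $|d|$. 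I expect no real obstacle here. The main point needing care is the sign bookkeeping of $m^Am^B$ when converting a covariance change into a variance reduction; the only technical check is that the indicator covariances are integrable so that Lemma~\ref{lem:wholeline} applies, which follows from $X^A,X^B$ being Gaussian.
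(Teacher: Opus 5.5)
Your proposal is correct and takes essentially the same route as the paper: you rerun the argument of Proposition~\ref{prop:finalstage} with independent displacements as the reference, use Lemma~\ref{lem:wholeline} to reduce everything to the expected partner distances $g_{\sqrt2\sigma}(d)$, $g_{2\sigma}(d)$, and $|d|$, and handle the sign of $m^Am^B$ correctly. You then obtain monotonicity in $|d|$ from evenness and $\partial_d g_\tau(d)=2\Phi(d/\tau)-1$, which is exactly the derivative computation behind \eqref{eq:gapdecay} that the paper cites.
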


\begin{proof}
The proof of Proposition~\ref{prop:finalstage} applies with independent displacements as the reference, under which the indicators are uncorrelated and the expected partner distance after the stage is $g_{\sqrt2\sigma}(d)$. Both reductions decrease in $|d|$ by evenness and \eqref{eq:gapdecay} with $\tau=\sqrt2\sigma$, together with
\[
 \partial_d\bigl(g_{2\sigma}(d)-g_{\sqrt2\sigma}(d)\bigr)
 =2\Phi\bigl(d/(2\sigma)\bigr)-2\Phi\bigl(d/(\sqrt2\sigma)\bigr)<0,
 \qquad d>0.
 \qedhere
\]
\end{proof}

Close partners give the largest reductions whatever their signs, and switching retains rank matching. In Example~4, rank matching gives shorter partner distances than within-sign matching on the same paired state, and switching gives lower terminal variances than within-sign pairing over complete simulations (Section~\ref{sec:profiles}).

\section{Numerical experiments}
\label{sec:numerics}

Sections~\ref{sec:coupling} and~\ref{sec:analysis} compare the variance of couplings one diffusion stage at a time. Here we measure the variance reduction of switching in complete simulations and the execution time it saves at a common accuracy. Switching gives the lowest variance in Example~1 and the shortest measured execution time at each error {\ed tolerance} in Example~2. Example~3 relates the reduction to partners of opposite sign, and Example~4 relates it to close partners, consistent with Section~\ref{sec:analysis}. Example~5 combines switching with conditional evaluation and control variates, and Example~6 applies it to nonlinear observables.

\subsection{Setup}
\label{sec:setup}

Unless stated otherwise, we solve Burgers' equation, with flux $f(u)=u^2/2$, $\nu=0.1$, $T=1$, and $h=0.005$, evaluate fields at the $L=400$ points of $\Omega=[-5,5]$ (Section~\ref{sec:estimators}), and start from the two-pulse profile
\begin{equation}
 u_0(x)=0.8\,e^{-(x+0.7)^2/(2\cdot0.5^2)}
       -0.5\,e^{-(x-0.4)^2/(2\cdot0.3^2)},
 \label{eq:twopulse}
\end{equation}
whose derivative has neighboring positive and negative regions, so partners of opposite sign arise where these regions meet. The reference solution $u_{\mathrm{ref}}$ is a deterministic Fourier pseudospectral solution, checked against the Cole--Hopf formula \citep{hopf1950,cole1951} and, for the cubic flux, by refinement. One sign-switched pair mean with $N=8192$ follows $u_{\mathrm{ref}}$ closely at $T=1$ (Figure~\ref{fig:problems}, left), so the errors compared below are small relative to the field.

\begin{figure}[pos=!htbp]
\centering
\includegraphics[width=0.75\linewidth,height=0.43\textheight,keepaspectratio]{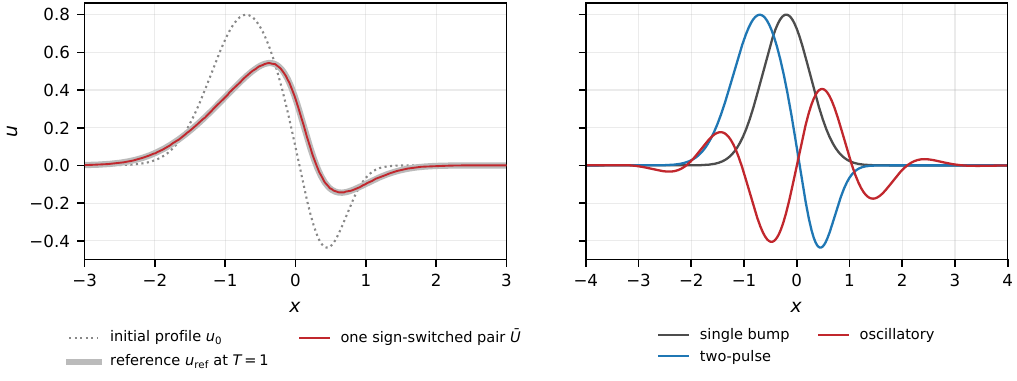}
\caption{Two-pulse Burgers, $N=8192$. Initial profile, reference solution at $T=1$, and one sign-switched pair mean (left). Example~4 initial profiles (right).}
\label{fig:problems}
\end{figure}

We compare two methods through ratios such as
\[
 \frac{V_{\swi}}{V_{\wit}}
 \qquad\text{and}\qquad
 \frac{\mathcal E_{\swi}(1)}{\mathcal E_{\wit}(1)}
\]
of the integrated variances and MSEs \eqref{eq:decomposition} of one realization, and a ratio below $1$ favors the method named first. Brackets give approximate $95\%$ percentile bootstrap intervals unless stated otherwise, and different methods use independent random inputs except in Example~5. All computations use Python with NumPy and SciPy \citep{abkindaripa2026software}.

\subsection{Example 1: variance reduction beyond within-sign pairing}
\label{sec:test1}

By Proposition~\ref{prop:law} and Corollary~\ref{cor:bias}, the pairings differ in integrated MSE only through the variance of the pair mean. Table~\ref{tab:headline} compares this variance for reflected, within-sign (Section~\ref{sec:rules}), and sign-switched pairing at $N=8192$, with $48$ realizations per method. Under reflected pairing, simulation $A$ has the law of one independent simulation, so its integrated variance, $2.07\times10^{-4}$, divided by two gives the entry for two independent simulations.

\begin{table}[pos=!htbp]
\centering
\caption{Example~1, two-pulse Burgers, $N=8192$. Integrated variance of the mean of two simulations and ratios between methods.}
\label{tab:headline}
\small
\setlength{\tabcolsep}{4pt}
\begin{tabularx}{\linewidth}{@{}>{\raggedright\arraybackslash}Xcc@{}}
\toprule
method & \shortstack{integrated\\variance $V$} & \shortstack{$V$ relative to two\\independent simulations}\\
\midrule
two independent simulations & $1.04\times10^{-4}$ & $1$\\
reflected pairing           & $4.33\times10^{-5}$ & $0.417$\\
within-sign pairing         & $7.43\times10^{-6}$ & $0.0717$\\
sign-switched pairing       & $5.49\times10^{-6}$ & $0.0530$\\
\midrule
comparison & \shortstack{integrated\\variance ratio} & \shortstack{integrated\\MSE ratio}\\
\midrule
within-sign $/$ reflected      & 0.172 $[0.145,\,0.204]$ & 0.206 $[0.174,\,0.244]$\\
sign-switched $/$ reflected    & 0.127 $[0.107,\,0.151]$ & 0.168 $[0.143,\,0.201]$\\
sign-switched $/$ within-sign & 0.739 $[0.682,\,0.801]$ & 0.819 $[0.754,\,0.893]$\\
\bottomrule
\end{tabularx}
\end{table}

Relative to two independent simulations, reflected pairing removes more than half of the integrated variance, while switching removes about $95\%$ of it. Switching and within-sign pairing, which both avoid reflecting partners of opposite sign (Proposition~\ref{prop:pairwise}), reduce the variance of reflected pairing six- to eightfold, and switching reduces that of within-sign pairing by $26\%$. The integrated-MSE ratios follow the same order and lie closer to $1$, because both errors include the common squared bias.

Under sign-switched pairing (Figure~\ref{fig:fields}, middle), the errors of the two simulations are nearly opposite and largely cancel in their mean. In the pair shown under reflected pairing (top), they agree near the center, where partners of opposite sign arise, and their mean keeps most of the error there.

\begin{figure}[pos=!htbp]
\centering
\includegraphics[width=0.60\linewidth,height=0.50\textheight,keepaspectratio]{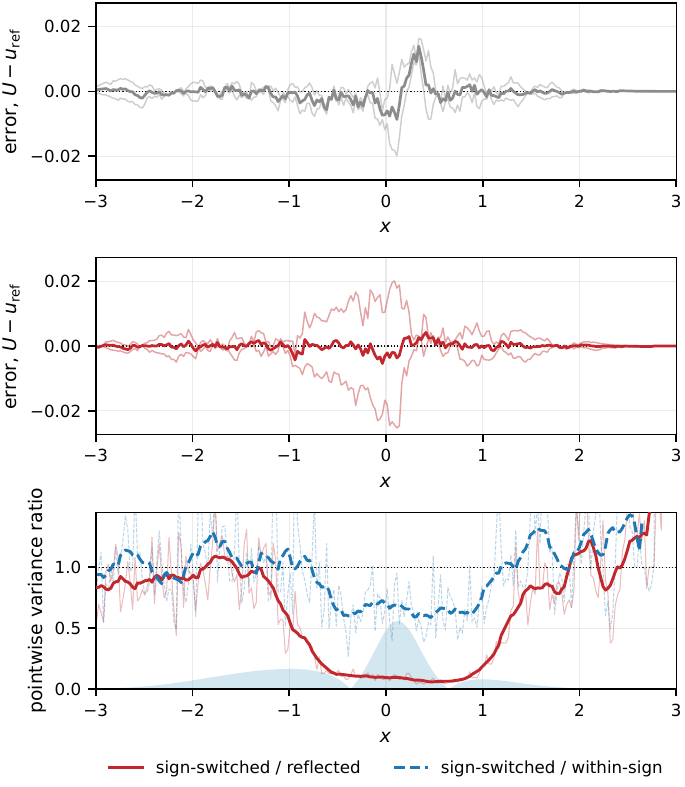}
\caption{Example~1, two-pulse Burgers, $N=8192$. Errors of the two simulations (thin curves) and of their mean (thick curves) for one realization of reflected pairing (top) and of sign-switched pairing (middle). Pointwise variance ratios (bottom), with $13$-point running means (thick curves) and the rescaled $|\partial_xu_{\mathrm{ref}}|$ (shaded).}
\label{fig:fields}
\end{figure}

The pointwise variance ratios (bottom) are smallest around the peak and trough, where the derivative changes sign and particles of opposite sign meet, and across the steep front between them. On $|x|\le1$, which carries $78\%$ of the integrated variance of within-sign pairing, switching reduces the variance by $34\%$, more than the $26\%$ over all of $\Omega$. On the flanks $|x|>1$, the pointwise ratios to within-sign pairing scatter around $1$, and the integrated ratio is $1.01$.

Averaging $R$ realizations takes work $W=RC$, where $C$ is the execution time of one realization. At equal work the product of variance and cost ranks estimators with a common mean \citep[Section~1.1.3]{glasserman2004}. The three pairings have comparable costs per realization, so the variance reduction of switching carries over to execution time, and its variance-times-cost ratio to within-sign pairing
\[
 \frac{V_{\swi}C_{\swi}}{V_{\wit}C_{\wit}}
\]
is $0.697$, about $30\%$ less variance for the same execution time.

\subsection{\edtitle{Example 2: execution time at an error tolerance}}
\label{sec:work}

Example~2 measures the execution time of each method at a common {\ed tolerance} $\mathrm{tol}$ for its integrated MSE \eqref{eq:decomposition}. The \emph{plan} of a method averages $R$ realizations, $R$ being the smallest positive integer with
\begin{equation}
 b^2_{\mathrm{hi}}+\frac{V_{\mathrm{hi}}}{R}\le\frac{\mathrm{tol}}{\kappa},
 \qquad \kappa=1.3,
 \label{eq:plan}
\end{equation}
where $V_{\mathrm{hi}}$ and $b^2_{\mathrm{hi}}$ are upper bootstrap confidence bounds for $V$ and $b^2$ from a pilot sample of $32$ realizations per method and $\kappa$ is a safety factor. Independent sampling and the three pairings share one squared-bias estimate, so their plans differ only through $V_{\mathrm{hi}}$.

We evaluate each plan on $32$ fresh, independent repetitions of the complete estimate and report the execution time per repetition. The counts and times reported below exclude the pilot sample, and the {\ed tolerance} was set from an earlier pilot study. A plan meets the {\ed tolerance} when the upper endpoint of an approximate simultaneous $95\%$ $t$ interval for its integrated MSE, Bonferroni-adjusted across the methods of that experiment, is below $\mathrm{tol}$.

\paragraph{Experiment~I.} At $N=8192$ and the {\ed tolerance} $7\times10^{-6}$ we compare switching with independent sampling, reflected pairing, and a randomized quasi-Monte Carlo (RQMC) adaptation of L{\'e}cot's quasi-random walk \citep[Eq.~(45)]{lecot_rims2001}, which spreads the diffusion inputs within each simulation using scrambled Sobol' points. The RQMC method, which we implemented for this comparison, shares our transport update but requires a separate bias estimate for its changed displacement law. All four plans \eqref{eq:plan} meet the {\ed tolerance} (Table~\ref{tab:heldout}). Switching uses $2$ pairs, that is $4$ simulations, where reflected pairing uses $34$, the RQMC method uses $46$, and independent sampling uses $65$. The sign-switched plan takes about an eighth of the time of reflected pairing. The RQMC plan clears the {\ed tolerance} by the widest margin, at an execution time close to independent sampling.

\begin{table}[pos=!htbp]
\centering
\caption{Example~2, two-pulse Burgers. Plans and their evaluation on fresh repetitions.}
\label{tab:heldout}
\small
\setlength{\tabcolsep}{3pt}
\begin{tabular*}{\linewidth}{@{\extracolsep{\fill}}lccccc@{}}
\toprule
method & $R$ & simulations & \shortstack{estimated\\integrated\\MSE} & \shortstack{upper\\confidence\\bound} & \shortstack{execution\\time (s)}\\
\midrule
\multicolumn{6}{l}{Experiment~I, {\ed tolerance} $7\times10^{-6}$, $N=8192$}\\
independent sampling & 65 & 65 & $5.35\times10^{-6}$ & $6.74\times10^{-6}$ & 6.66\\
reflected pairing        & 17 & 34 & $5.03\times10^{-6}$ & $6.36\times10^{-6}$ & 3.34\\
RQMC method          & 46 & 46 & $4.02\times10^{-6}$ & $4.87\times10^{-6}$ & 6.85\\
sign-switched pairing & 2 & 4 & $5.10\times10^{-6}$ & $5.61\times10^{-6}$ & 0.407\\
\midrule
\multicolumn{6}{l}{Experiment~II, {\ed tolerance} $2.5\times10^{-5}$, $N=2048$}\\
reflected pairing        & 18 & 36 & $1.40\times10^{-5}$ & $1.83\times10^{-5}$ & 0.615\\
within-sign pairing      & 3 & 6 & $1.77\times10^{-5}$ & $1.94\times10^{-5}$ & 0.119\\
sign-switched pairing & 2 & 4 & $1.98\times10^{-5}$ & $2.20\times10^{-5}$ & 0.0715\\
\bottomrule
\end{tabular*}
\end{table}

\paragraph{Experiment~II.} At $N=2048$ and the {\ed tolerance} $2.5\times10^{-5}$ we compare switching with reflected pairing and with within-sign pairing, its closest competitor in Example~1. All three plans meet the {\ed tolerance}, and switching uses $2$ pairs where within-sign pairing uses $3$ slightly costlier pairs, reducing the measured execution time by about $40\%$. In both experiments switching gives the fastest evaluated plan that meets the {\ed tolerance}.

\paragraph{Bias and work.} Switching reduces the MSE most while $V/R$ exceeds $b^2$. By Corollary~\ref{cor:bias}, at equal $R$
\begin{equation}
 \frac{\mathcal E_{\swi}(R)}{\mathcal E_{\refl}(R)}
 =\frac{b^2+V_{\swi}/R}{b^2+V_{\refl}/R}\longrightarrow1
 \qquad\text{as }R\to\infty
 \label{eq:crossover}
\end{equation}
whenever $b^2>0$. The left panel of Figure~\ref{fig:cost} compares the evaluated plans of Experiment~I (diamonds) with the model curves $b^2+V/R$ against the execution time $RC$.

\begin{figure}[pos=!htbp]
\centering
\includegraphics[width=0.75\linewidth,height=0.43\textheight,keepaspectratio]{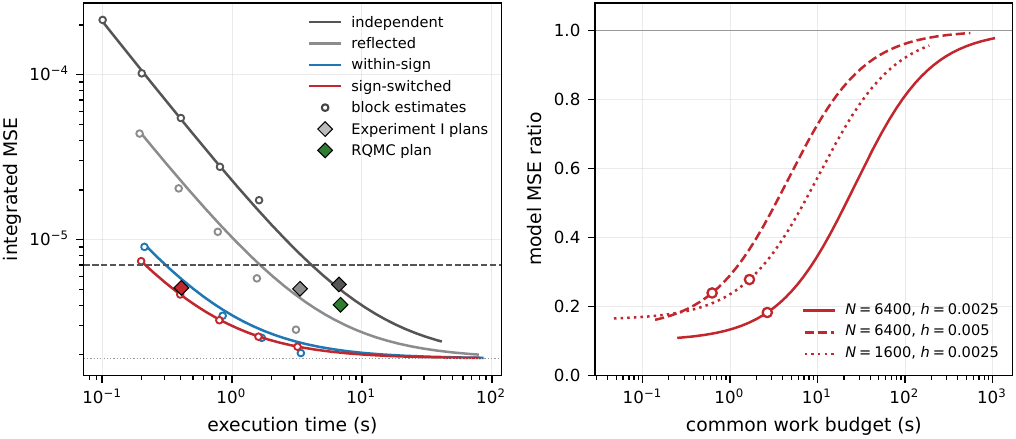}
\caption{Example~2, two-pulse Burgers. Integrated MSE against execution time, with the {\ed tolerance} dashed and $b^2$ dotted (left), and model MSE ratio of sign-switched to reflected pairing at equal work (right).}
\label{fig:cost}
\end{figure}

The curves use $b^2$, $V$, and $C$ estimated from the $48$ realizations per method of Example~1, and the evaluated plans and averages over disjoint groups of these realizations (circles) lie close to them. The RQMC plan has no curve because its bias is estimated separately. The sign-switched curve crosses the {\ed tolerance} first, at a small fraction of the time of the reflected and independent curves, and all curves approach the common squared bias $b^2\approx1.9\times10^{-6}$, as \eqref{eq:crossover} predicts.

At equal work $W$, a method $\bullet$ whose realizations cost $C_\bullet$ averages $R_\bullet=W/C_\bullet$ realizations, treating $R$ as continuous, and the ratio in \eqref{eq:crossover} becomes
\begin{equation}
 \frac{b^2+V_{\swi}C_{\swi}/W}{b^2+V_{\refl}C_{\refl}/W},
 \label{eq:equalwork}
\end{equation}
which is close to the variance-times-cost ratio for small $W$ and rises toward $1$ as $W$ grows. The right panel of Figure~\ref{fig:cost} plots \eqref{eq:equalwork} for three combinations of $N$ and $h$, with $b^2$, $V$, and $C$ estimated from $64$ realizations per method. At $W=C_{\swi}V_{\swi}/b^2$ (circles), where the variance term of switching equals the squared bias, the ratios are still between $0.18$ and $0.28$. Once the squared bias dominates, a larger $N$ or a smaller $h$ reduces it and, at these settings, also reduces the variance ratio of switching to reflected pairing, to $0.0986$ at $N=6400$ and $h=0.0025$.

\subsection{Example 3: particle signs and positions}
\label{sec:structure}

Lemma~\ref{lem:conditional} and Propositions~\ref{prop:finalstage} and~\ref{prop:everystage} show that, at a fixed paired state, the gain comes from partners of opposite sign and is largest when they are close. Example~3 tests this on complete simulations by comparing sign-switched with reflected pairing under heat and Burgers dynamics (Table~\ref{tab:structure}), with $48$ realizations per method. Its four configurations place $N=2048$ particles on the scale of the diffusion length $\sqrt{2\nu T}\approx0.45$. Their masses $m_i$ and particle intervals are
\[
 \begin{array}{@{}l@{\quad}l@{\quad}l@{}}
 \text{alternating signs} & \pm1/N \text{ of alternating sign} & [-0.25,0.25],\\[2pt]
 \text{merging blocks} & 1/N \text{ and } -1/N & [-0.9,-0.5] \text{ and } [0.5,0.9],\\[2pt]
 \text{separated blocks} & 1/N \text{ and } -1/N & [-2.0,-1.4] \text{ and } [1.4,2.0],\\[2pt]
 \text{one sign} & -2/N & [-1,1].
 \end{array}
\]
Each block holds $N/2$ particles, the particles in each interval are equally spaced including its endpoints, and $u_-=0$ throughout. Merging blocks leave a gap of $1.0$, across which particles of opposite sign meet, and separated blocks a gap of $2.8$, across which they rarely meet.

\begin{table}[pos=!htbp]
\centering
\caption{Example~3, heat and Burgers, $N=2048$. Variance ratio, sign-switched to reflected pairing, by configuration.}
\label{tab:structure}
\small
\begin{tabular}{lcc}
\toprule
configuration & heat & Burgers\\
\midrule
alternating signs & 0.0492 $[0.0405,0.0608]$ & 0.0625 $[0.0516,0.0766]$\\
merging blocks    & 0.476 $[0.421,0.545]$   & 0.481 $[0.426,0.544]$\\
separated blocks  & 0.994 $[0.917,1.08]$    & 1.01 $[0.930,1.10]$\\
one sign          & 0.987 $[0.902,1.08]$    & 0.967 $[0.822,1.17]$\\
\bottomrule
\end{tabular}
\end{table}

The measured ratios agree with this. With alternating signs, switching leaves about $5\%$--$6\%$ of the variance under reflected pairing, and with merging blocks about half of it. For separated blocks the ratios are consistent with $1$, since the wide gap leaves few partners of opposite sign before $T=1$. With one sign the two couplings are the same algorithm (Lemma~\ref{lem:conditional}), so the exact ratio is $1$ and the estimates differ from it by statistical error alone.

The Burgers ratios closely follow the heat ratios, for which, given the paired state entering a stage and rank matching, the sign rule minimizes each matched pair's share of that stage's contribution to the terminal variance (Proposition~\ref{prop:everystage}). For alternating signs the nearly canceling masses keep the field, and hence the Burgers transport velocity, small, so diffusion dominates. For merging blocks the field reaches $0.5$ and transport matters, yet the ratios again agree, so close partners of opposite sign help with and without transport.

\subsection{Example 4: profiles, times, viscosities, and fluxes}
\label{sec:profiles}

Rank matching minimizes the total partner distance (Remark~\ref{rem:sorted}), and closer partners give larger per-pair reductions (Corollary~\ref{cor:distance}). Example~4 tests whether these results for a fixed paired state carry over to complete simulations, comparing partner distances and terminal variances for the two-pulse profile, a single bump, and an oscillatory profile (Figure~\ref{fig:problems}, right),
\begin{equation}
 \begin{aligned}
 u_{0,\mathrm{bump}}(x)&=0.8\exp\!\left(-\frac{(x+0.2)^2}{2\cdot0.45^2}\right),\\
 u_{0,\mathrm{osc}}(x)&=0.45\sin(3x)\exp\!\left(-\frac{x^2}{2\cdot1.1^2}\right).
 \end{aligned}
 \label{eq:profiles}
\end{equation}
The single bump has one change of gradient sign and the other two have more, so the three range from few to many places where particles of opposite sign meet. We evolve each to $T=0.25$, $1$, and $2$ with $N=2048$ and $48$ realizations per method. To isolate the matching, we apply both matchings to the paired state that switching generates just before the final stage, in $24$ separate realizations of each profile and time, and compute for each state the partner-distance ratio
\[
 \frac{\sum_{j=1}^N\bigl|a_j-b_j\bigr|}{\sum_{j=1}^N\bigl|a_{\pi_{\wit}(j)}-b_j\bigr|}\;\le\;1,
\]
where $a_j$ and $b_j$ are the positions of rank $j$ in $A$ and $B$ and the inequality is \eqref{eq:sortedopt}. Figure~\ref{fig:profiles} (right) reports the mean of this ratio over the $24$ states.

\begin{figure}[pos=!htbp]
\centering
\includegraphics[width=0.70\linewidth,height=0.43\textheight,keepaspectratio]{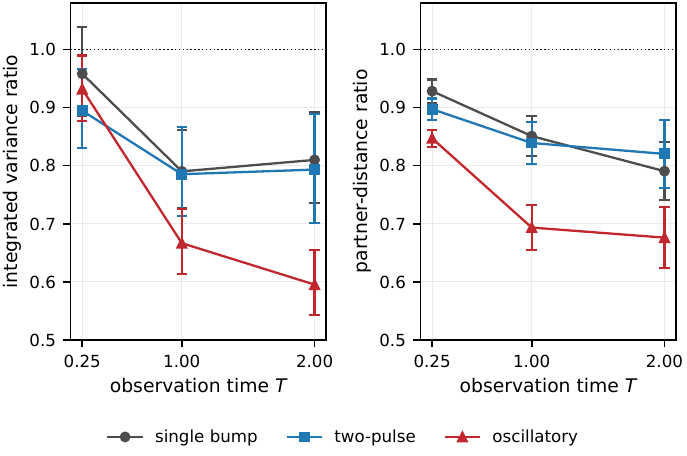}
\caption{Example~4, Burgers, $N=2048$. Variance ratio, sign-switched to within-sign pairing, with approximate $95\%$ bootstrap intervals (left), and mean partner-distance ratio with two-standard-error bars (right). In the right panel, both matchings act on the same paired states from switching.}
\label{fig:profiles}
\end{figure}

\begin{figure}[pos=!htbp]
\centering
\includegraphics[width=0.55\linewidth,height=0.43\textheight,keepaspectratio]{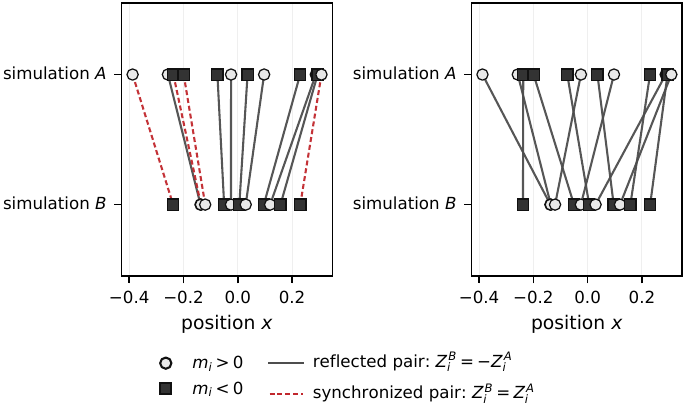}
\caption{Example~4. Rank matching (left) and within-sign matching (right) of one paired $12$-particle state.}
\label{fig:construction}
\end{figure}

\paragraph{Profiles and times.} In Figure~\ref{fig:profiles} (left), the variance ratios of sign-switched to within-sign pairing lie between $0.895$ and $0.958$ at $T=0.25$, near $1$. By $T=1$ they fall to between $0.667$ and $0.790$, and at $T=2$ lie between $0.595$ and $0.810$, with the oscillatory profile lowest at both times.

\paragraph{Partner distance.} Figure~\ref{fig:construction} shows both matchings for a $12$-particle version of the alternating configuration of Example~3 just before the final diffusion stage. Rank matching (left) gives close partners and synchronized pairs of opposite sign, and within-sign matching (right) the long pairs described in Section~\ref{sec:rules}.

The distance ratios range from $0.676$ to $0.928$ (Figure~\ref{fig:profiles}, right), and the oscillatory profile has both the largest reduction in distance and the smallest variance ratio at later times. For each profile the distance ratio, like the variance ratio, is closest to $1$ at $T=0.25$, consistent with the per-pair reductions in Corollary~\ref{cor:distance}.

\paragraph{Viscosity.} For the two-pulse profile at $T=1$, with $N=2048$ and $48$ realizations per method, the variance ratio of sign-switched to reflected pairing falls from $0.343$ to $0.0914$ as $\nu$ increases through $\{0.02,\allowbreak0.05,\allowbreak0.1,\allowbreak0.2,\allowbreak0.5\}$. The ratio to within-sign pairing lies between $0.693$ and $0.915$, with its largest value at $\nu=0.02$, where transport dominates diffusion and the ratio is consistent with $1$ at this sample size.

\paragraph{Cubic flux.} The nonconvex flux $f(u)=u^3/3$ has the nonnegative characteristic speed $f'(u)=u^2$, so all transport is to the right. In the two-pulse problem with $N=1600$ (Section~\ref{sec:test2}, Figure~\ref{fig:controls}), the ratio of switching to within-sign pairing is $0.769$, against $0.792$ for Burgers' equation.

Overall, the reduction relative to within-sign pairing persists across profiles, later times, viscosities from $0.05$ to $0.5$, and both fluxes.

\subsection{Example 5: conditional evaluation and control variates}
\label{sec:test2}

Example~5 tests whether two standard variance-reduction techniques, applied to reflected pairing, reach the variance of switching, and whether they add to it. The first is \emph{conditional evaluation} of the final diffusion stage, a form of conditional Monte Carlo \citep[Section~6.4, Eqs.~(6.53)--(6.55)]{glasserman2004}, which replaces the field of each simulation by its conditional expectation given the positions $a_i$ and masses $m_i$ entering the final stage,
\[
 \E\bigl[U(x,T)\bigm|\Fil_{K-1}\bigr]
 =u_-+\sum_{i=1}^N m_i\,\Phi\!\left(\frac{x-a_i}{\sigma}\right),
\]
removing the randomness of the final displacements analytically.

A control variate subtracts from $\bar U$ a multiple of a correlated field with known mean \citep[Section~4.1]{glasserman2004}. Our two controls label each particle by its initial rank $i$ and follow it through the sorts, with $X^0_i$ its initial position, $v^0_i$ its initial velocity, and $S_i$ the sum of the displacements it receives. The heat and velocity controls, which reuse these displacements, are the fields
\[
 \begin{aligned}
 U_{\mathrm{heat}}(x)&=u_-+\sum_{i=1}^N m_i\,\ind\{X^0_i+S_i\le x\},\\
 U_{\mathrm{vel}}(x)&=u_-+\sum_{i=1}^N m_i\,\ind\{X^0_i+v^0_iT+S_i\le x\}.
 \end{aligned}
\]
By the argument of Proposition~\ref{prop:law}, each $S_i$ is a sum of $K$ independent normal displacements with variance $\sigma^2$, so $S_i\sim\mathcal N(0,2\nu T)$ and both fields have known means. For a pair, the estimator with the heat control is $\bar U-c\,(\bar U_{\mathrm{heat}}-\E\bar U_{\mathrm{heat}})$, where $\bar U_{\mathrm{heat}}$ is the pair mean of $U_{\mathrm{heat}}$, and likewise for the velocity control. The coefficient $c$, one for the whole field, is estimated from a separate sample of $16$ pairs, so the estimator keeps the mean of $\bar U$.

We compare the estimators on the two-pulse problem with $N=1600$ for both fluxes, using $64$ realizations, the same random displacements for every estimator, and fields evaluated at $801$ points on $[-8,8]$. Control costs include coefficient fitting and known-mean evaluation, amortized over the $64$ realizations. In Figure~\ref{fig:controls}, each row marked $+$ adds one technique to the reflected or sign-switched pairing above it, separately from the other $+$ rows.

\begin{figure}[pos=!htbp]
\centering
\includegraphics[width=0.70\linewidth,height=0.43\textheight,keepaspectratio]{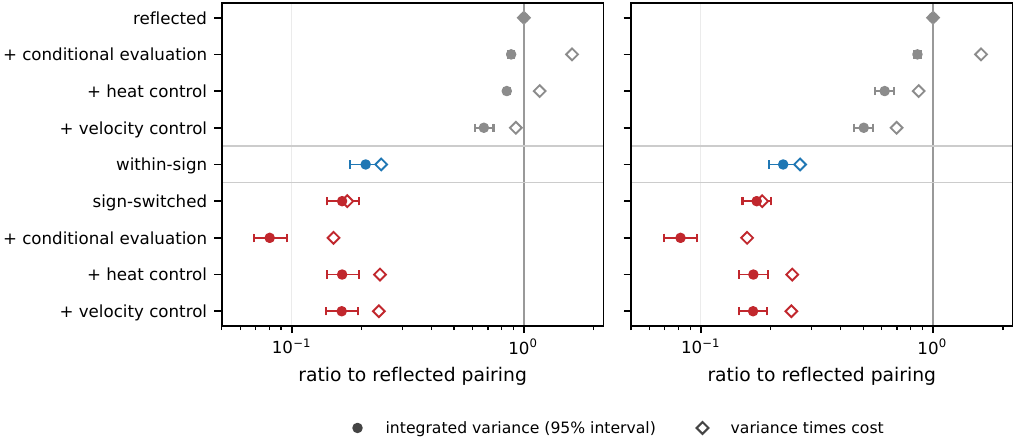}
\caption{Example~5, $N=1600$. Variance and variance times cost relative to reflected pairing, Burgers (left) and cubic (right) flux.}
\label{fig:controls}
\end{figure}

Figure~\ref{fig:controls} shows that the velocity control, the better of the two, removes about a third of the variance under reflected pairing for Burgers' equation (ratio $0.672$) and about half for the cubic flux ($0.503$), whereas within-sign and sign-switched pairing remove about four fifths of it. After switching, either control changes the variance by at most about $3.5\%$, and switching alone has the lower variance times cost.

Added to switching, conditional evaluation roughly halves the variance again for both fluxes and gives the lowest variance times cost in Figure~\ref{fig:controls}. For Burgers' equation, in units of the reflected variance, it removes about $0.12$ without switching and $0.08$ with switching. In this two-pulse Burgers problem, switching at the final stage alone (Corollary~\ref{cor:finalonly}) gives about $4\%$ of the reduction obtained by switching at every stage, so switching at the earlier stages, which conditional evaluation leaves unchanged, is essential.

Conditional evaluation sums $N$ terms at each evaluation point, so its cost grows with the number of points. Adding it to switching multiplies the variance times cost of switching alone by $0.613$ at $201$ points and by $1.49$ at $1601$ points, so the combination reduces the variance times cost for grids up to the $801$ points of Figure~\ref{fig:controls}.

\subsection{Example 6: peak gradient and level location}
\label{sec:observables}

Example~6 extends the comparison to nonlinear observables. A pair gives two estimators of an observable $Q$, a single number computed from the field. The mean of $Q$ is $\tfrac12\bigl(Q(U^A)+Q(U^B)\bigr)$ and $Q$ of the mean is $Q(\bar U)$. By Proposition~\ref{prop:law} the first has the same bias under every admissible coupling, whereas the bias of the second can depend on the coupling (Remark~\ref{rem:nonlinear}). We use the peak gradient and the level location, the first downward crossing of $u=0.25$ to the right of the global maximum,
\[
 \begin{aligned}
 Q_{\mathrm{peak}}(U)&=\max_{1\le\ell\le L}\bigl|D_\ell U\bigr|,\\
 Q_{\mathrm{level}}(U)&=\min\bigl\{x\ge x_{\ell^*}\,:\,\widetilde U(x)=0.25\bigr\},
 \end{aligned}
\]
where $D_\ell U$ is the central difference of $U$ at $x_\ell$, one-sided at the two ends of $\Omega$, the field is largest at $x_{\ell^*}$ among the points $x_1,\dots,x_L$ of Section~\ref{sec:estimators}, and $\widetilde U$ interpolates $U$ linearly between them.

We evaluate both observables on the $48$ realizations per method of Example~1, and Table~\ref{tab:nonlinear} compares their empirical MSEs (mean squared differences from $Q(u_{\mathrm{ref}})$) and gives the estimated squared-bias share of the sign-switched MSE.

\begin{table}[pos=!htbp]
\centering
\caption{Example~6, two-pulse Burgers, $N=8192$. Empirical MSE ratios for two observables and two estimators, and the estimated bias$^2$ share of the sign-switched MSE.}
\label{tab:nonlinear}
\small
\setlength{\tabcolsep}{3pt}
\begin{tabularx}{\linewidth}{@{}>{\raggedright\arraybackslash}X>{\raggedright\arraybackslash}Xccc@{}}
\toprule
observable & estimator & \shortstack{sign-switched $/$\\reflected} & \shortstack{sign-switched $/$\\within-sign} & \shortstack{estimated\\bias$^2$ share}\\
\midrule
peak gradient        & $Q$ of the mean      & 0.416 $[0.244,0.702]$ & 0.602 $[0.359,0.984]$ & 0.56\\
                     & mean of $Q$          & 0.832 $[0.603,1.14]$ & 1.06 $[0.803,1.39]$ & 0.87\\
level $x(u=0.25)$    & $Q$ of the mean      & 0.129 $[0.0699,0.247]$ & 0.855 $[0.497,1.42]$ & 0.19\\
                     & mean of $Q$          & 0.0898 $[0.0527,0.158]$ & 0.847 $[0.497,1.49]$ & 0.30\\
\bottomrule
\end{tabularx}
\end{table}

For the peak gradient, $Q$ of the mean has a lower MSE under switching than under either alternative. The mean of $Q$ benefits less, since an estimated $87\%$ of its MSE under switching is squared bias, which is the same for every admissible coupling. On average the peak gradient of one simulation exceeds the reference value $1.29$ by $0.10$ to $0.11$ under all three couplings, as the maximum of a noisy field tends to exceed the maximum of its mean.

Averaging the fields before taking the maximum reduces the fluctuations, and the coupling reduces them further, so $Q$ of the mean has less than half the MSE of the mean of $Q$ under all three couplings, and about a fifth under switching (ratio $0.43$ under reflected, $0.38$ under within-sign, and $0.21$ under sign-switched pairing).

For the level location, the ratios of switching to within-sign pairing are consistent with $1$ at this sample size, and switching reduces the MSE relative to reflected pairing for both estimators.

\section{Conclusion}
\label{sec:conclusions}

Sign-switched pairing reduces the computational cost of field estimation in the tested gradient random-walk problems by matching particles by position and choosing identical or opposite diffusion displacements according to the signs of their masses. In the two-pulse Burgers problem, switching removes about $95\%$ of the integrated variance of the mean of two independent simulations and reduces that of within-sign pairing by $26\%$ (Section~\ref{sec:test1}). At a common {\ed tolerance} for the integrated mean squared error, it takes about an eighth of the execution time of reflected pairing. At a separate {\ed tolerance}, it takes about $40\%$ less time than within-sign pairing, giving the fastest evaluated plan in both comparisons (Section~\ref{sec:work}). For configurations with particles of both signs, its variance ratio to reflected pairing under Burgers' equation nearly matches that under the heat equation (Section~\ref{sec:structure}), and its reduction relative to within-sign pairing persists across profiles at later times, viscosities from $0.05$ to $0.5$, and both nonlinear fluxes (Section~\ref{sec:profiles}). Because the pair mean keeps the bias of a single simulation, the saved execution time can go into more particles or a smaller time step once the squared bias dominates (Section~\ref{sec:work}).

Each simulation keeps exactly the law of an uncoupled simulation although partners and their signs change, so the pair mean has the bias of a single simulation (Proposition~\ref{prop:law} and Corollary~\ref{cor:bias}). For a fixed paired state and matching, the sign rule minimizes each matched pair's contribution to the variance of the pair mean just after a diffusion stage, a contribution that reflected pairing maximizes for partners of opposite sign (Proposition~\ref{prop:pairwise}). The final-stage gain over reflected pairing has a closed form for any flux (Proposition~\ref{prop:finalstage}), and for the heat equation the optimality and the closed form extend to each stage's contribution in a martingale decomposition of the terminal variance (Proposition~\ref{prop:everystage}).

Law preservation follows from the solver's independent Gaussian draws when the matching and the choice of reflection or synchronization are fixed before each stage's draws, and the sign rule requires only signed weights. For nonlinear fluxes, a stage-by-stage result must account for reordering, which can reverse the effect of a displacement on the terminal field (Remark~\ref{rem:reversal}). Relaxation approximations \citep{bertaglia2024} and the levels of multilevel Monte Carlo \citep[Section~2]{giles2008} are further settings where switching could reduce the variance. In two dimensions, a matching of minimal total partner distance, as rank matching is in one dimension (Remark~\ref{rem:sorted}), could replace rank matching. Random vortex methods \citep{chorin1973}, whose circulations are signed and whose displacements are Gaussian, are then a natural next test, and two-dimensional gradient random-walk methods \citep{shermanmascagni1994}, whose particles carry a direction as well as a mass, require a sign rule for that direction.

\section*{Code and data availability}
The solver, the simulation data, and the scripts and run settings that reproduce the tables and figures of this paper are archived as version~1.0 \citep{abkindaripa2026software} at \url{https://doi.org/10.5281/zenodo.23055907}.



\bibliographystyle{elsarticle-num}

\bibliography{references-coupling}

\end{document}